\newcommand{\R}{{\mathbb R}}
\newtheorem{theorem}{Theorem}[section]
\newtheorem{proposition}[theorem]{Proposition}
\theoremstyle{definition}
\newtheorem{definition}[theorem]{Definition}
\theoremstyle{remark}
\newcommand{\remove}[1]{ }
\def\R{\mathbb R}
\def\be{\begin{equation}}
\def\ee{\end{equation}}
\def\ba{\begin{eqnarray}}
\def\ea{\end{eqnarray}}
\numberwithin{equation}{section}
\begin{document}
\title[Periodicity for Kawahara equation]{Massera's theorems for a higher order dispersive system}
\author[Capistrano--Filho]{Roberto de A. Capistrano--Filho*}
\thanks{*Corresponding author: roberto.capistranofilho@ufpe.br}
\address{Departamento de Matemática, Universidade Federal de Pernambuco (UFPE), 50740-545, Recife-PE, Brazil.}
\email{roberto.capistranofilho@ufpe.br}
\author[de Jesus]{Isadora Maria de Jesus}
\address{Instituto de Matemática, Universidade Federal de Alagoas (UFAL), Maceió-AL, Brazil and Departamento de Matemática, Universidade Federal de Pernambuco (UFPE), 50740-545, Recife-PE, Brazil}
\email{isadora.jesus@im.ufal.br; isadora.jesus@ufpe.br}
\subjclass[2020]{Primary: 35B10,  35B15,  70K43  Secondary: 93D15}
\keywords{Kawahara equation, bounded solutions, periodic solutions, Massera theorems, damping mechanism}

\begin{abstract}
This work is devoted to present Massera-type theorems for the Kawahara system, a higher order dispersive equation, posed in a bounded domain.  Precisely,  thanks to some properties of the semigroup and the decays of the solutions of this equation,  we are able to prove its solutions are periodic, quasi-periodic and almost periodic. 
\end{abstract}
\maketitle

\section{Introduction}
\subsection{Problem under consideration} 
Results of the existence of periodic solutions for differential equations dates back to the 50s, when in 1950, J.  L.  Massera published a remarkable paper \cite{massera} on the existence of periodic solutions to ordinary differential equations (ODE) with periodic right-hand sides. Precisely, the corresponding linear setup Massera's theorem is as follows: Consider ODE of the form
\begin{equation}\label{eq2f}
\dot{x}=A(t) x+b(t), \quad x \in \mathbb{R}^{m},
\end{equation}
with the matrix $A(t)$ and the vector $b(t)$ continuous on $\mathbb{R}_{+}$ and periodic with the same period $\tau$.  Then, the system \eqref{eq2f} has periodic solution with period $\tau$ if and only if it has a bounded solution on $\mathbb{R}_{+}$.    

So,  in this context,  we are interested to prove some periodic properties of the following Kawahara equation in a bounded domain
\begin{equation}\label{kaw} 
\begin{cases}
u_t+u_{xxx}-u_{xxxxx}+uu_x=0& (x,t) \in I\times \R\\
u(0,t)=\varphi(t), \ u(1,t)=u_x(1,t)=u_x(0,t)=0,& t\in \R\\
u_{xx}(1,t)=\alpha u_{xx}(0,t), & t\in \R\\
u(x,0)=u_0(x), &x\in I,
\end{cases}
\end{equation}
with a boundary force $\varphi(t)$ in a bounded domain $I=(0,1)$ and a damping term $\alpha u_{xx}(0,t)$, where $|\alpha|<1$.   Precisely, we are interested to understand if the system \eqref{kaw} has good properties when we investigate its solutions, considering the context introduced to Massera.  Roughly speaking,  we are interested in the study of the existence and qualitative property of recurrent solutions. This kind of property may be reformulated on the following question. 

\vspace{0.2cm}
\begin{itemize}
\item[] \textbf{Question $\mathcal{A}:$} Are there periodic solutions for the system \eqref{kaw}?
\end{itemize} 

\subsection{Physical motivation} 
Under suitable assumption on amplitude, wavelength, wave steepness and so on, the properties of the asymptotic models for water waves has been extensively studied to understand the full water wave system\footnote{ See for instance  \cite{ASL, BLS, Lannes} and references therein, for a rigorous justification of various asymptotic models for surface and internal waves.}.  In this spirit, formulating the waves as a free boundary problem of the incompressible, irrotational Euler equation in an appropriate non-dimensional form, one has two non-dimensional parameters $\delta := \frac{h}{\lambda}$ and $\varepsilon := \frac{a}{h}$, where the water depth, the wavelength and the amplitude of the free surface are parameterized as $h, \lambda$ and $a$, respectively. Moreover, another non-dimensional parameter $\mu$ is called the Bond number, which measures the importance of gravitational forces compared to surface tension forces. 

The physical condition $\delta \ll 1$ characterizes the waves, which are called long waves or shallow water waves, but there are several long wave approximations according to relations between $\varepsilon$ and $\delta$.  So that,  if we consider $$\varepsilon = \delta^4 \ll 1\quad \text{and} \quad \mu = \frac13 + \nu\varepsilon^{\frac12},$$ and in connection with the  critical Bond number $\mu = \frac13$,  we have the so-called Kawahara equation which is an equation derived by Hasimoto and Kawahara in \cite{Hasimoto1970,Kawahara} that take the form \[\pm2 u_t + 3uu_x - \nu u_{xxx} +\frac{1}{45}u_{xxxxx} = 0.\]
Rescaling this equation,  we will study in this paper the following system
\[u_t + uu_x + u_{xxx} -u_{xxxxx} = 0.\]

\subsection{Historical background} Before answering the Question $\mathcal{A}$,  let us introduce a state of the art related to the Kawahara equation.  As mentioned before, problems related with higher order dispersive systems are extensively studied.  Precisely, stabilization and control problems have been studied in recent years.

A pioneer work is due to Silva and Vasconcellos \cite{CV}. The authors studied the stabilization of global solutions of the linear Kawahara equation in a bounded interval under the effect of a localized damping mechanism.  The second work in this way is due Capistrano-Filho \textit{et al.} \cite{CaKawahara}.  In this work the authors considered the Kawahara equation in a bounded domain $Q_T = (0, T) \times (0,L)$,
\begin{equation}\label{int1}
\left\lbrace
\begin{array}{llr}
u_{t} + u_{x} +u_{xxx} - u_{xxxxx}+uu_x= f(t,x) & \mbox{in} \ Q_{T}, \\ 
u(t,0)=u(t,L)=  u_{x}(t,0)=u_{x}(t,L)=u_{xx}(t,L) = 0 & \mbox{on} \ [0,T],\\
 u(0,x) = u_{0}(x) & \mbox{in} \ [0,L].
\end{array}\right. 
\end{equation} 
In this article the authors were able to introduce an internal feedback law in \eqref{int1}, considering general nonlinearity $u^pu_x$, $p\in [1,4)$, instead of $uu_x$.  They proved that under the effect of the damping mechanism the energy associated with the solutions of the system decays exponentially. 

Related with internal control issues in bounded domain, Chen \cite{MoChen} presented results considering the Kawahara equation \eqref{int1} posed on a bounded interval with a distributed control $f(t,x)$ and homogeneous boundary conditions. She showed the result taking advantage of a Carleman estimate associated with the linear operator of the Kawahara equation with an internal observation. With this in hand, she was able to get a null controllable result when $f$ is effective in a $\omega\subset(0,L)$.  In \cite{CaGo},  considering the system \eqref{int1} with an internal control $f(t,x)$ and homogeneous boundary conditions, the authors are able to show that the equation in consideration is exact controllable in $L^2$-weighted Sobolev spaces and, additionally, the Kawahara equation is controllable by regions on $L^2$-Sobolev space.

Recently,  a new tool to find control properties for the Kawahara operator was proposed in \cite{CaSo, CaSoGa}.  First, in \cite{CaSo}, the authors showed a new type of controllability for the Kawahara equation, what they called \textit{overdetermination control problem}. They are able to find a control acting at the boundary that guarantees that the solution of the problem under consideration satisfies an integral condition. In addition, when the control acts internally in the system, instead of the boundary, the authors proved that this condition is also satisfied.   After that, in \cite{CaSoGa}, the authors extend this idea for the internal control problem for the Kawahara equation on unbounded domains.  Precisely, under certain hypotheses over the initial and boundary data,  they are able to prove that there exists an internal control input such that solutions of the Kawahara equation satisfies an integral overdetermination condition considering the Kawahara equation posed in the real line, left half-line and right half-line.  

We finish presenting the last works in control theory related to the Kawahara equation. In \cite{boumediene,  CaVi}, under suitable assumptions on the time delay coefficients, the authors are able to prove that solutions of the Kawahara system are exponentially stable. The results are obtained using Lyapunov approach and compactness-uniqueness argument.  We caution that this is only a small sample of the extant work on control theory for the Kawahara equation. 

\subsection{Notation and auxiliary results}  Before presenting the main results of the article, let us introduce some notation and two auxiliary results. Denote by $C^1(\R)$ a function space whose elements are continuously differentiable complex-valued functions on $\R$ and its norm
$$
\|u\|_{C^1(\R)}:=\|u\|_{C(\R)}+\|u_t\|_{C(\R)}, \quad \forall u\in C^1(\R).
$$
Set
$$C^1_b(\R)=\{u\in C^1(\R);\|u\|_{C^1(\R)}<+\infty\}.$$
Consider by $L^2(I)$ the space of all Lebesgue square integrable complex-valued functions on $I$ with the following inner product 
$${\displaystyle (u,v):=\mbox{\rm Re}\left(\int_0^1 u\overline{v}dx\right), \quad \forall u,v\in L^2(I),}$$ 
where $\overline{v}$ denotes the conjugate of $v$. With the previous inner product, we define in $L^2(I)$ the following norm 
$${\displaystyle  \|u\|=(u,u)^{\frac{1}{2}}}.$$
Additionally,  let $H^s(I),s\geq 0,$ be the classical Sobolev spaces of complex-valued functions on $I$ with its the classical inner produc and norm, denoted by $\|\cdot\|_{H^s(I)}.$  Finally,  consider  $$H^s_\alpha(I)=\{u\in H^s(I)|u^{(5i)}(0)=0=u^{(5i)}(1),u^{(5i+1)}(0)=0=u^{(5i+1)}(1), u^{(5i+2)}(1)=\alpha u^{(5i+2)}(0)\}$$
where the derivatives are of order less than or equal to $n-1.$ The norm and inner product of  $H^s_\alpha(I)$ are inherited from $H^s(I).$

The first result is devoted to prove the well-posedness \textit{via} semigroup theory,  which is the key to prove the other main results of the article.  Precisely,  we first prove that the linear Kawahara operator generates $\{S(t)\}_{t\geq0}$ the $C_0$--semigroup  of contraction on $L^2(I)$.  
\begin{theorem}
\label{main1}
There exists $\omega>0$  such that for any $k=0,1,2,3,4$ and $5$, we can find a positive constant $C_k>0$ which the semigroup associated to the linear Kawahara operator satisfies
$$\|S(t)u_0\|_{H^k_\alpha(I)}\leq C_ke^{-\omega t}\|u_0\|_{H^k_\alpha(I)}, $$
for all $t>0$.
\end{theorem}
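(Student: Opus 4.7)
The plan is to apply the Lumer--Phillips theorem to set up the semigroup, then obtain exponential decay in $L^2$ via a multiplier/observability argument, and finally bootstrap to higher Sobolev spaces by the domain characterization of $A$ combined with interpolation.

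First, I would define the unbounded operator $Au=-u_{xxx}+u_{xxxxx}$ with domain $D(A)=H^5_\alpha(I)$. A direct integration by parts using the boundary conditions $u(0)=u(1)=u_x(0)=u_x(1)=0$ kills the third order contribution, while four integrations by parts on the fifth order term produce the identity
\[
\mathrm{Re}(Au,u)=\tfrac12\bigl(|u_{xx}(1)|^2-|u_{xx}(0)|^2\bigr)=\tfrac12(|\alpha|^2-1)|u_{xx}(0)|^2\le 0,
\]
so $A$ is dissipative, and the dissipation is quantified by the trace of $u_{xx}$ at $x=0$. To obtain $m$-dissipativity I would solve $(\lambda I-A)u=f$ via a Lax--Milgram argument on a Hilbert space encoding the homogeneous conditions $u(0)=u(1)=u_x(0)=u_x(1)=0$ plus the constraint $u_{xx}(1)=\alpha u_{xx}(0)$; the hypothesis $|\alpha|<1$ makes the associated bilinear form coercive. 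Lumer--Phillips then yields the $C_0$-semigroup of contractions $\{S(t)\}_{t\ge 0}$ on $L^2(I)=H^0_\alpha(I)$.

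Next, I would establish the case $k=0$ by combining the energy identity
\[
\tfrac{d}{dt}\|S(t)u_0\|_{L^2}^2=(|\alpha|^2-1)|u_{xx}(0,t)|^2
\]
with an observability inequality $\|u_0\|_{L^2}^2\le C\int_0^T|u_{xx}(0,t)|^2\,dt$ for some fixed $T>0$. To prove this observability, I would first derive the standard multiplier identities (multiplying the linear equation by $u$, by $xu$, and by appropriate cutoffs of $u$) to get a hidden regularity estimate for $u_{xx}(0,t)$; and then close the inequality by a compactness--uniqueness argument, where the unique continuation step reduces to showing that a solution whose $u_{xx}(0,\cdot)$ vanishes must be zero, which in turn follows from a spectral/Holmgren-type analysis of the linear Kawahara operator on $I$. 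Once observability holds, iterating on intervals $[nT,(n+1)T]$ gives $\|S(t)u_0\|_{L^2}\le C_0 e^{-\omega t}\|u_0\|_{L^2}$.

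For the top case $k=5$, I would use that $A$ commutes with $S(t)$ on $D(A)$: if $u_0\in H^5_\alpha(I)=D(A)$ then $AS(t)u_0=S(t)Au_0$, so the $L^2$-decay applied to $Au_0$ combined with elliptic regularity for the fifth order boundary value problem (which gives $\|u\|_{H^5}\simeq\|u\|_{L^2}+\|Au\|_{L^2}$ on $H^5_\alpha$) transfers the exponential rate to $H^5_\alpha$. The intermediate cases $k=1,2,3,4$ would then follow by real interpolation between $H^0_\alpha$ and $H^5_\alpha$, once one verifies $[H^0_\alpha(I),H^5_\alpha(I)]_{k/5}=H^k_\alpha(I)$ with equivalent norms; the semigroup estimate interpolates with the same exponential rate $\omega$ but constants $C_k$ depending on $k$.

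The main obstacle I anticipate is Step~3: proving the boundary observability estimate for the linear Kawahara operator with the prescribed mixed boundary conditions. The multiplier calculations for a fifth order equation are delicate, and the compactness--uniqueness reduction requires a unique continuation statement that is not completely trivial in the presence of the parameter $\alpha$. A secondary subtlety is identifying the interpolation spaces $[L^2,D(A)]_{k/5}$ with the scale $H^k_\alpha(I)$, which depends on compatibility conditions of the boundary conditions at the relevant Sobolev thresholds; this typically works out thanks to the structured indexing $5i$, $5i+1$, $5i+2$ in the definition of $H^s_\alpha(I)$.
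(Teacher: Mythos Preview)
Your proposal is correct and follows essentially the same architecture as the paper: the case $k=0$ is obtained from the energy identity together with a boundary observability inequality proved by a multiplier estimate plus compactness--uniqueness (with unique continuation supplied by an external lemma), the case $k=5$ is deduced by applying the $L^2$ decay to $Au_0$ and using the equivalence $\|u\|_{H^5_\alpha}\simeq\|u\|+\|Au\|$, and the intermediate cases $k=1,2,3,4$ are handled by interpolation. The only noticeable methodological difference is that you propose to obtain $m$-dissipativity via a Lax--Milgram resolvent argument, whereas the paper instead verifies that both $A$ and $A^*$ are dissipative and invokes a standard corollary (Pazy) to conclude that $A$ generates a contraction semigroup; both routes are equally valid and lead to the same conclusion.
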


The previous theorem is the key to prove the existence of the bounded solution for the  Kawahara equation \eqref{kaw}.  For that,  pick  $$X:=C_b(\R, H^2(I))$$ with a norm  $$ \|u\|_X:=\sup_{t\in \R}  \|u(t)\|_{H^2(I)}$$ and define the following set $$X_\rho:=\{u\in X| \|u\|_X\leq \rho\}.$$  The next theorem, thanks to the previous one,  ensures that the solutions of \eqref{kaw} are bounded.

\begin{theorem} \label{main2}There exists a constant $\epsilon>0$ such that for all $\varphi\in C^1(\R)$ satisfying  $\|\varphi\|_{C^1(\R)}\leq \epsilon,$ the system \eqref{kaw} admits a unique solution $u$ such that $$\|u\|_X\leq C\epsilon,$$ where $C>0$ is a constant independent of $\epsilon.$
\end{theorem}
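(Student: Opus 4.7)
The plan is to construct $u$ as a fixed point of a Duhamel-type map on the ball $X_\rho$ with $\rho$ of order $\epsilon$, using the exponential decay of the semigroup $S(t)$ furnished by Theorem \ref{main1}. First I would remove the inhomogeneous boundary data by picking, once and for all, $\eta\in C^\infty([0,1])$ with $\eta(0)=1$, $\eta(1)=\eta'(0)=\eta'(1)=0$, and $\eta''(1)=\alpha\eta''(0)$, and setting $\psi(x,t):=\varphi(t)\eta(x)$. Writing $u=v+\psi$, the new unknown $v$ satisfies the homogeneous boundary conditions built into $H^k_\alpha(I)$ together with
\begin{equation*}
v_t + v_{xxx} - v_{xxxxx} + (v+\psi)(v+\psi)_x \;=\; F(t),\qquad F(x,t):= -\varphi'(t)\eta(x) - \varphi(t)\bigl(\eta'''(x) - \eta^{(5)}(x)\bigr),
\end{equation*}
and both $\|F(t)\|_{H^2(I)}$ and $\|\psi(t)\|_{H^2(I)}$ are bounded by $C\|\varphi\|_{C^1(\R)}\leq C\epsilon$ uniformly in $t\in\R$.

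Next, since I want bounded solutions on all of $\R$ and $\|S(t-s)\|_{H^2_\alpha(I)\to H^2_\alpha(I)}\leq C_2\,e^{-\omega(t-s)}\to 0$ as $s\to -\infty$, the drift term $S(t-s)v(s)$ in the variation-of-constants formula vanishes in that limit, so I seek $v$ as a fixed point of
\begin{equation*}
\Gamma(v)(t) \;:=\; \int_{-\infty}^{t} S(t-\tau)\bigl[\,F(\tau) - (v+\psi)(v+\psi)_x(\tau)\,\bigr]\,d\tau.
\end{equation*}
Applying Theorem \ref{main1} with $k=2$, writing the nonlinearity in divergence form $ww_x=\tfrac12(w^2)_x$ to use that $H^2(I)$ is a Banach algebra (so $\|w^2\|_{H^2}\leq C\|w\|_{H^2}^2$), and combining with the one-derivative smoothing absorbed by the Duhamel integral against the dissipative $S(t)$, I expect an estimate of the form
\begin{equation*}
\|\Gamma(v)\|_X \;\leq\; C_\ast\bigl(\epsilon + (\|v\|_X + \epsilon)^2\bigr).
\end{equation*}
Choosing $\rho:=2C_\ast\epsilon$ and $\epsilon$ small enough forces $\Gamma(X_\rho)\subset X_\rho$. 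Running the same computation on the bilinear difference for $v_1,v_2\in X_\rho$ produces a Lipschitz constant of order $\rho+\epsilon$, so $\Gamma$ is a strict contraction. Banach's fixed-point theorem then yields a unique $v\in X_\rho$, and $u:=v+\psi$ meets the conclusion with $\|u\|_X\leq C\epsilon$.

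The hard part is closing the nonlinear estimate at the $H^2$-level: the naive algebra bound $\|uu_x\|_{H^2}\lesssim \|u\|_{H^2}\|u\|_{H^3}$ overshoots the ambient space $X$. I expect to handle this either by exploiting the divergence form $uu_x=\tfrac12(u^2)_x$ in conjunction with a local smoothing (one-derivative gain) for $S(t)$ that is implicit in the energy identity used to establish Theorem \ref{main1}, or by propagating higher regularity through Theorem \ref{main1} with $k=3,4,5$ and then reading off the $H^2$ bound a posteriori. Once this nonlinear estimate is in place, the remaining ingredients — boundary lift, passage to $-\infty$ in Duhamel, and Banach iteration — are standard.
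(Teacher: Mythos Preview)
Your overall architecture --- boundary lift, Duhamel formula sent to $-\infty$ (killing the free evolution term by the exponential decay of $S$), then Banach fixed point on $X_\rho$ --- is exactly the paper's strategy. The one place where your proposal is incomplete is precisely the place you flagged: closing the nonlinear estimate in $H^2$.

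The paper does \emph{not} try to put the nonlinearity itself in $H^2$ (which, as you note, would cost an $H^3$ norm), and it does not use any ``one-derivative'' smoothing. Instead it uses a \emph{two}-derivative $L^2\!\to\!H^2$ smoothing estimate for the semigroup, stated as Proposition~\ref{prop6}:
\[
\|S(t)g\|_{H^2(I)}\;\le\;C\sqrt{\tfrac{1}{1-\alpha^2}\tfrac{1}{t}+1}\,\|g\|_{L^2(I)},
\]
which follows from the Kato-type identity in Proposition~\ref{prop2}(iii) via the mean-value trick. The nonlinear term is then handled by splitting the semigroup in half,
\[
S(t-s)=S\!\Bigl(\tfrac{t-s}{2}\Bigr)\,S\!\Bigl(\tfrac{t-s}{2}\Bigr),
\]
applying the $H^2_\alpha\!\to\!H^2_\alpha$ exponential decay of Theorem~\ref{main1} to the outer factor and the $L^2\!\to\!H^2$ smoothing above to the inner factor. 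This reduces the nonlinear bound to $\|y y_x\|_{L^2}\le \|y\|_{L^\infty}\|y_x\|_{L^2}\le C\|y\|_{H^2}^2$ (Agmon), and the resulting kernel $e^{-\omega\tau/2}\bigl(\tau^{-1/2}+1\bigr)$ is integrable on $(0,\infty)$, giving the constants you called $C_\ast$. With this device your inequalities \eqref{eq21}--\eqref{eq26} go through verbatim and the contraction closes.

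Your second proposed fix --- bootstrapping to $H^k$ via Theorem~\ref{main1} with $k\ge 3$ --- would not close the loop: the fixed-point space $X$ is $C_b(\R;H^2)$, and you have no a priori higher regularity to feed in. The $L^2\!\to\!H^2$ smoothing with the integrable singularity is really the missing ingredient.
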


\subsection{Massera-type theorems and structure of the article} With the previous background in hand it is clear that no results concerning the recurrent solutions for the Kawahara system are presented in the literature.  This manuscript is interesting to fill this gap giving answers for the Question $\mathcal{A}$ before presented.   Precisely,  the next three theorems give us Massera-type theorems for a higher order dispersive system  that is,  the result below ensures that solution of \eqref{kaw} is $T$--periodic.
\begin{theorem}\label{main3}
Let $$\|\varphi\|_{C^1(\R)}\leq \epsilon,$$ where $\epsilon$ is the constant determined by Theorem \ref{main2}.  If $\varphi$ is a function $T$--periodic, thus $u$ solution of \eqref{kaw}, given by Theorem \ref{main2},  is also a function $T$--periodic.
\end{theorem}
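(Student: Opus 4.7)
The plan is to exploit the uniqueness assertion contained in Theorem \ref{main2} together with the autonomy of the Kawahara equation in time. Given the $T$--periodic forcing $\varphi$ and the unique bounded solution $u\in X_{C\epsilon}$ produced by Theorem \ref{main2}, I would consider the time-translate $v(x,t):=u(x,t+T)$ and show that it must coincide with $u$.

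First I would verify that $v$ itself solves \eqref{kaw} with the same boundary datum. Since the PDE $u_t+u_{xxx}-u_{xxxxx}+uu_x=0$ is autonomous, the shifted function $v$ satisfies the same identity; the boundary relations $v(1,t)=v_x(1,t)=v_x(0,t)=0$ and $v_{xx}(1,t)=\alpha v_{xx}(0,t)$ hold for every $t$ because they hold for $u$; and the trace at $x=0$ becomes $v(0,t)=u(0,t+T)=\varphi(t+T)=\varphi(t)$ by the $T$--periodicity of $\varphi$. Hence $v$ satisfies exactly the same boundary value problem as $u$.

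Next I would check that $v$ lies in the ball where the uniqueness from Theorem \ref{main2} applies. Because $\R$ is invariant under the shift $t\mapsto t+T$ and the norm of $X=C_b(\R,H^2(I))$ is defined as a supremum over all $t\in\R$, we have
\[
\|v\|_X=\sup_{t\in\R}\|u(\cdot,t+T)\|_{H^2(I)}=\|u\|_X\leq C\epsilon,
\]
so $v\in X_{C\epsilon}$. The uniqueness clause of Theorem \ref{main2} then forces $v\equiv u$, i.e., $u(x,t+T)=u(x,t)$ for every $(x,t)\in I\times\R$, which is precisely the asserted $T$--periodicity.

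The only conceptual obstacle is confirming that the uniqueness in Theorem \ref{main2} is genuinely available in the class in which the shifted function $v$ lives; since $X=C_b(\R,H^2(I))$ (and not $C_b(\R_+,H^2(I))$), the translation $t\mapsto t+T$ preserves the solution class and the argument closes. In the event that the uniqueness were instead restricted to a ball strictly smaller than the one containing $v$, I would fall back on a Poincar\'e-map approach, defining $P(u_0):=u(\cdot,T;u_0,\varphi)$ on a small ball of $H^2(I)$, using the exponential decay of the semigroup from Theorem \ref{main1} together with the smallness of $\varphi$ to make $P$ a strict contraction, and obtaining the $T$--periodic trajectory as its unique fixed point.
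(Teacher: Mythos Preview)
Your proposal is correct and follows essentially the same argument as the paper: define the time-translate $v(x,t)=u(x,t+T)$, verify that it solves the same boundary value problem with the same forcing $\varphi$ thanks to autonomy and $T$--periodicity, and invoke the uniqueness from Theorem~\ref{main2} to conclude $v\equiv u$. Your additional verification that $\|v\|_X=\|u\|_X\leq C\epsilon$ and the fallback Poincar\'e-map remark are more explicit than what the paper writes, but the core idea is identical.
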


Additionally,  the next Massera-type theorem gives some property of the periodicity of the solution to \eqref{kaw}.  The result can be read as follows.
\begin{theorem}\label{main4}
Let $$\|\varphi\|_{C^1(\R)}\leq \epsilon,$$ where $\epsilon$ is the constant determined by Theorem \ref{main2}.  If $\varphi$ is quasi-periodic function, the solution $u$ of \eqref{kaw}, obtained in Theorem \ref{main2}, is also quasi-periodic function. Moreover,  if $\varphi$ is $\overline{\omega}$--quasi-periodic function in $t$, thus  the solution $u$ of \eqref{kaw}, obtained in Theorem \ref{main2}, is also $\overline{\omega}$--quasi-periodic function in $t.$
\end{theorem}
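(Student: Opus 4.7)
The approach is to promote the autonomous translation-invariance of \eqref{kaw} into a $\T^k$-equivariance of the bounded-solution map, in the spirit of the hull/cocycle method for Massera-type problems. Writing $\overline{\omega}=(\omega_1,\dots,\omega_k)$ with rationally independent components and $\varphi(t)=\Psi(\omega_1 t,\dots,\omega_k t)$ for a continuous hull $\Psi:\T^k\to\R$ (with enough regularity to recover $\varphi\in C^1$), I introduce for each phase $\theta\in\T^k$ the shifted datum $\varphi_\theta(t):=\Psi(\omega t+\theta)$. Time-translation invariance of the $C^1$-norm gives $\|\varphi_\theta\|_{C^1(\R)}=\|\varphi\|_{C^1(\R)}\leq\epsilon$ uniformly in $\theta$, so Theorem \ref{main2} produces a unique bounded solution $u^\theta\in X$ with $\|u^\theta\|_X\leq C\epsilon$, and in particular $u^0$ coincides with the given solution $u$.

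Because \eqref{kaw} is autonomous, for every fixed $s\in\R$ the shift $t\mapsto u^\theta(t+s)$ is again a bounded solution of \eqref{kaw} but with boundary datum $\varphi_\theta(\cdot+s)=\varphi_{\theta+\omega s}$. The uniqueness in Theorem \ref{main2} then forces the cocycle relation
\begin{equation*}
u^{\theta+\omega s}(t)=u^\theta(t+s),\qquad \theta\in\T^k,\ t,s\in\R.
\end{equation*}
Defining $U:\T^k\to H^2(I)$ by $U(\theta):=u^\theta(0)$, this rewrites as $u^\theta(s)=U(\theta+\omega s)$; specializing to $\theta=0$ yields the representation
\begin{equation*}
u(t)=U(\omega t),\qquad t\in\R.
\end{equation*}
Consequently, once $U$ is proved continuous on the compact torus $\T^k$, both the first (quasi-periodicity) and the second ($\overline{\omega}$-frequency-preserving) assertions of Theorem \ref{main4} follow at once from the very definition of an $\overline{\omega}$-quasi-periodic $H^2(I)$-valued function.

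The remaining point is continuity of $\theta\mapsto U(\theta)$, which I would reduce to Lipschitz dependence of the solution furnished by Theorem \ref{main2} on its boundary datum. Given $\varphi_1,\varphi_2$ with solutions $u_1,u_2$, the difference $w=u_1-u_2$ obeys
\begin{equation*}
w_t+w_{xxx}-w_{xxxxx}+\tfrac12\bigl((u_1+u_2)w\bigr)_x=0,
\end{equation*}
with boundary trace $w(0,t)=\varphi_1(t)-\varphi_2(t)$ and the remaining conditions of \eqref{kaw} homogeneous. Since $\|u_1+u_2\|_X\leq 2C\epsilon$, the transport term is a small Lipschitz perturbation of the linear Kawahara operator; lifting the inhomogeneous Dirichlet trace to an auxiliary function with homogeneous boundary data and exploiting the exponential semigroup decay of Theorem \ref{main1}, a Duhamel/contraction argument exactly parallel to that of Theorem \ref{main2} should yield $\|w\|_X\leq C\|\varphi_1-\varphi_2\|_{C^1(\R)}$. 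Composed with the continuous map $\T^k\ni\theta\mapsto\varphi_\theta\in C_b^1(\R)$, this gives the sought continuity of $U$. I expect this Lipschitz estimate---specifically, the handling of the non-homogeneous boundary trace $w(0,t)$ through a suitable boundary lift and the quantitative use of Theorem \ref{main1} in the $H^2_\alpha(I)$-topology---to be the main technical obstacle of the proof.
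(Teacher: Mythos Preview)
Your approach is essentially the paper's: parametrize the boundary datum by a phase $\theta\in\T^k$, apply Theorem~\ref{main2} to each $\varphi_\theta$, and exploit autonomy plus uniqueness to obtain the cocycle identity $u^{\theta+\omega s}(t)=u^\theta(t+s)$, whence $u(t)=U(\omega t)$ with $U(\theta):=u^\theta(0)$; the paper organizes the $2\pi$-periodicity of $U$ in each coordinate as a separate claim, but the logic is identical.

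Two remarks are worth making. First, a small correction: the equality $\|\varphi_\theta\|_{C^1(\R)}=\|\varphi\|_{C^1(\R)}$ does not follow from time-translation invariance alone, since a generic $\theta$ is not of the form $\omega s$; rather, it holds because the orbit $\{t\overline{\omega}\bmod 2\pi:t\in\R\}$ is dense in $\T^k$ (rational independence) and the hull $\Psi$ is continuous, so the supremum over $t$ already sweeps the whole torus. Second, you actually go further than the paper by proposing to verify continuity of $\theta\mapsto U(\theta)$ via a Lipschitz data-to-solution estimate; the paper's proof records only the well-definedness and periodicity of $U$ and does not check continuity, even though its own definition of $\overline{\omega}$-quasi-periodicity requires $U\in C(I\times\R^k,\R)$. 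Your proposed route for this Lipschitz bound---boundary lift plus Duhamel with the decay of Theorem~\ref{main1}---is exactly the machinery the paper uses elsewhere (cf.\ the proof of Theorem~\ref{main5}, where an analogous difference estimate is carried out), so there is no genuine obstacle here.
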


Finally,  let us present the last result of this work. Precisely, we are able to prove that the solutions of \eqref{kaw} are almost periodic.

\begin{theorem}\label{main5}
Let $\|\varphi\|_{C^1(\R)}\leq \epsilon,$ where $0<\epsilon\ll 1$  is obtained via Theorem \ref{main2}.  If  $\varphi, \varphi'$ are functions almost periodic,  the solution $u$ of \eqref{kaw},  given by Theorem \ref{main2}, is also almost periodic function. 
\end{theorem}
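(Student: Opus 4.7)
The plan is to apply Bohr's characterization of almost periodicity: a bounded continuous map $f\colon\R\to B$ into a Banach space is almost periodic if and only if for every $\eta>0$ the set of its $\eta$--almost periods is relatively dense in $\R$. The strategy exploits the fact that almost periodicity is preserved by the solution map of \eqref{kaw} thanks to Lipschitz continuity of this map with respect to $\varphi$ in the $C^1$ topology. Concretely, the goal is to prove that every $\eta$--almost period common to $\varphi$ and $\varphi'$ is a $C\eta$--almost period of $u$ in $X$, with a constant $C$ independent of $\eta$. Since the finite family $\{\varphi,\varphi'\}$ of scalar almost periodic functions admits a relatively dense set of common $\eta$--almost periods, for each $\eta>0$ one can choose $\tau=\tau(\eta)$ with
$$
\sup_{t\in\R}|\varphi(t+\tau)-\varphi(t)|+\sup_{t\in\R}|\varphi'(t+\tau)-\varphi'(t)|<\eta.
$$

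First, observe that $v(t,x):=u(t+\tau,x)$ belongs to $X$ and solves \eqref{kaw} with boundary datum $\varphi(\cdot+\tau)$, whose $C^1(\R)$ norm is still bounded by $\epsilon$; uniqueness in Theorem \ref{main2} identifies $v$ as the bounded solution produced by this translated datum. Next, one establishes the Lipschitz estimate
$$
\|u_1-u_2\|_X\le C\,\|\varphi_1-\varphi_2\|_{C^1(\R)}\qtq{for}\|\varphi_i\|_{C^1(\R)}\le \epsilon,
$$
by revisiting the contraction argument behind Theorem \ref{main2}. Introducing a lifting $\Psi(\varphi)(t,x)=\varphi(t)\psi(x)$ with $\psi$ adapted to the boundary conditions of $H^5_\alpha(I)$, one reduces \eqref{kaw} to a problem for $w=u-\Psi(\varphi)$ with homogeneous boundary data; the nonlinear term $uu_x$ expands into $ww_x$ plus linear--in--$w$ terms with $\Psi(\varphi)$ coefficients, together with a source depending on $\varphi$ and $\varphi'$. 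For the difference of two such solutions the resulting equation is linear and driven only by $\varphi_1-\varphi_2$ and $\varphi_1'-\varphi_2'$, and Theorem \ref{main1} combined with a Duhamel estimate closes the bound for $\epsilon$ small enough.

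Applying this inequality with $\varphi_1=\varphi(\cdot+\tau)$ and $\varphi_2=\varphi$ gives
$$
\sup_{t\in\R}\|u(t+\tau,\cdot)-u(t,\cdot)\|_{H^2(I)}=\|v-u\|_X\le C\eta,
$$
so the set of $C\eta$--almost periods of $u$ in $X$ is relatively dense for every $\eta>0$. Bohr's criterion then yields that $u\in C_b(\R,H^2(I))$ is almost periodic. The main obstacle will be the Lipschitz step: one must choose $\psi$ so that no derivative of $\varphi$ beyond the first is produced by the lifting, and absorb the nonlinear cross--terms using the smallness of $\epsilon$ together with the exponential decay of $S(t)$ from Theorem \ref{main1}. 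The remainder of the argument is structurally the same translation--and--uniqueness reduction used for Theorems \ref{main3} and \ref{main4}.
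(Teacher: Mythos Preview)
Your proposal is correct and follows essentially the same route as the paper. The paper introduces the explicit lifting $y=u+A(x)\varphi(t)$ with $A(x)=\frac{\alpha-1}{\alpha+1}x^{2}+\frac{2}{\alpha+1}x-1$, writes $y$ as the fixed point of the Duhamel map $\Psi$, and then directly estimates $\|y(\cdot+\sigma)-y(\cdot)\|_{X}$ via the same semigroup/Duhamel bounds used in Theorem~\ref{main2}, absorbing the quadratic term by the smallness $\|y\|_{X}\le\rho$ to obtain
\[
\|y(\cdot+\sigma)-y(\cdot)\|_{X}\le \|a(\cdot+\sigma)-a(\cdot)\|_{X}+\|b(\cdot+\sigma)-b(\cdot)\|_{X}+\tfrac{3C}{\omega}\|f(\cdot+\sigma)-f(\cdot)\|_{X},
\]
which is exactly your Lipschitz inequality specialized to $\varphi_{1}=\varphi(\cdot+\sigma)$, $\varphi_{2}=\varphi$; the only cosmetic difference is that you package the estimate as a general Lipschitz dependence $\|u_{1}-u_{2}\|_{X}\le C\|\varphi_{1}-\varphi_{2}\|_{C^{1}}$ before applying it to translates, whereas the paper performs the computation directly on the shifted function.
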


The remainder of the paper is organized as follows.  In the Section \ref{Sec2}, we present the auxiliary results that are essential for the proof of the Massera-type theorems, precisely, we present the proof of Theorems \ref{main1} and \ref{main2}.  After that, in the Section \ref{Sec3}, we present the answer for the question $\mathcal{A}$ which is divided in three results, that is, we present the proof of Theorems \ref{main3}, \ref{main4} and \ref{main5}.  Further comments are presented in Section \ref{Sec4}. Finally,  on the Appendix \ref{appendix}, we give some properties of the energy associated to \eqref{kaw}.

\section{Preliminaries }\label{Sec2}
In this section we are interested to prove some properties of the following linear Kawahara system
\begin{equation}\label{kaw_1} 
\begin{cases}
u_t+u_{xxx}-u_{xxxxx}=0& (x,t)\in I\times \R\\
u(0,t)=u(1,t)=u_x(1,t)=u_x(0,t)=0,& t\in \R\\
u_{xx}(1,t)=\alpha u_{xx}(0,t), & t\in \R\\
u(x,0)=u_0(x), &x\in I.
\end{cases}
\end{equation}
which are essential for the rest of the article. 

\subsection{Well-posedness: Linear system}
From now on $C$ with or without subscripts denotes positive constants whose value may change on different occasions. We will write the dependence of constant on parameters explicitly if it is essential.  Additionally,  we denote $\lambda_* >0$ is the smallest constant such that the following inequality holds
$$\|u_x\|^2\geq \lambda_* \|u\|^2,\ u\in H^1_0(I).$$

Consider the following operator $A:D(A)\subset L^2(I)\longrightarrow L^2(I)$ defined by 
$$Au:=-u_{xxx}+u_{xxxxx}$$ where $$D(A)=\{u\in H^5(I): u(0)=u(1)=u_x(0)=u_x(1)=0, u_{xx}(1)=\alpha u_{xx}(0)\},$$ with $|\alpha|<1, $ and its adjoint $A^*v=v_{xxx}-v_{xxxxx}$ with$$D(A^*)=\{v\in H^5(I): v(0)=v(1)=v_x(0)=v_x(1)=0, v_{xx}(0)=\alpha v_{xx}(1)\}.$$
Thus, the following property holds true.
\begin{proposition}\label{c_0}
$A$ generates a $C_0$ semigroup of contractions on $L^2(I).$
\end{proposition}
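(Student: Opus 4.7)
The plan is to invoke the Lumer--Phillips theorem in its symmetric form: if $A$ is densely defined and both $A$ and $A^*$ are dissipative on the Hilbert space $L^2(I)$, then $A$ generates a $C_0$--semigroup of contractions. Density of $D(A)$ is immediate since $D(A)$ contains $C^\infty_c(I)$, so the substance of the argument is the two dissipativity estimates and the identification of the adjoint.

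First I would compute $\mathrm{Re}(Au,u)$ for $u\in D(A)$ by integrating by parts. The third-order piece $\int_0^1(-u_{xxx})\overline{u}\,dx$ contributes zero real part: two integrations by parts (killing all boundary terms thanks to $u(0)=u(1)=u_x(0)=u_x(1)=0$) yield a purely imaginary expression. For the fifth-order piece, three successive integrations by parts give
\begin{equation*}
\int_0^1 u_{xxxxx}\overline{u}\,dx = |u_{xx}(1)|^2 - |u_{xx}(0)|^2 - \int_0^1 u_{xx}\overline{u}_{xxx}\,dx,
\end{equation*}
and a short computation using $\partial_x|u_{xx}|^2 = u_{xx}\overline{u}_{xxx} + \overline{u}_{xx}u_{xxx}$ shows that the last integral contributes $\tfrac12(|u_{xx}(1)|^2-|u_{xx}(0)|^2)$ to the real part. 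Substituting the boundary condition $u_{xx}(1)=\alpha u_{xx}(0)$, I obtain
\begin{equation*}
\mathrm{Re}(Au,u) = \frac{|\alpha|^2-1}{2}|u_{xx}(0)|^2 \leq 0,
\end{equation*}
which is precisely where the assumption $|\alpha|<1$ is used. Thus $A$ is dissipative, and in fact strictly dissipative away from the subspace $\{u_{xx}(0)=0\}$, a fact that will be useful for the decay theorem later.

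Next I would repeat the computation for $A^*v = v_{xxx}-v_{xxxxx}$ on its domain. The sign flips in both pieces, and the boundary term of the fifth-order part becomes $\tfrac12(|v_{xx}(0)|^2-|v_{xx}(1)|^2)$. Using the adjoint boundary condition $v_{xx}(0)=\alpha v_{xx}(1)$ yields
\begin{equation*}
\mathrm{Re}(A^*v,v) = \frac{|\alpha|^2-1}{2}|v_{xx}(1)|^2 \leq 0,
\end{equation*}
so $A^*$ is dissipative as well. I would also briefly check that the stated $A^*$ really is the adjoint of $A$: this is a routine integration-by-parts where one collects the five boundary terms arising from the fifth derivative and chooses the conditions on $v$ so that every boundary contribution in $(Au,v)-(u,A^*v)$ cancels; the mixed condition $v_{xx}(0)=\alpha v_{xx}(1)$ is exactly what forces the $u_{xx}(0)\overline{v_{xx}(0)}$ and $u_{xx}(1)\overline{v_{xx}(1)}$ terms to cancel together.

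With both $A$ and $A^*$ dissipative and densely defined, the standard consequence of Lumer--Phillips (or equivalently, checking that $\mathrm{Range}(\lambda I - A)=L^2(I)$ for some $\lambda>0$ by solving the linear fifth-order boundary value problem $\lambda u + u_{xxx}-u_{xxxxx}=f$) gives that $A$ generates a $C_0$--semigroup of contractions on $L^2(I)$. I expect the main obstacle to be purely bookkeeping: keeping track of which boundary terms survive at each step of the five integrations by parts, and verifying that the adjoint boundary conditions as stated really are the correct ones; once the algebra is aligned, the sign condition $|\alpha|<1$ delivers dissipativity on both sides at once.
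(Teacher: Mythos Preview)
Your approach is essentially identical to the paper's: the paper also verifies density of $D(A)$, computes $(Au,u)=\tfrac12(\alpha^2-1)(u_{xx}(0))^2\le 0$ and $(A^*v,v)=\tfrac12(\alpha^2-1)(v_{xx}(1))^2\le 0$ by integration by parts, and then invokes the same Lumer--Phillips type corollary (cited there as \cite[Corollary 4.4]{Pazy}). Your write-up is simply more detailed in tracking the boundary terms and in checking that the stated $A^*$ is indeed the adjoint, which the paper takes for granted.
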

\begin{proof}
Since $A$ is a continuous linear operator, using the closed graph theorem,  $A$ has the closed graph. Moreover, as $D(A)$ is dense in $L^2(I)$,  if we prove that $A$ and $A^*$
are dissipative,  thanks to \cite[Corollary 4.4]{Pazy} we have that $A$ generates a $C_0$ semigroup of contractions on $L^2(I).$ To do this, by using the definitions of $A$ and $A^*$, we get, integrating by parts that
$$
(Au,u)=\dfrac{1}{2}(\alpha^2-1)(u_{xx}(0))^2\leq 0
$$
and
$$
(A^*v,v)=\dfrac{1}{2}(\alpha^2-1)(v_{xx}(1))^2\leq 0,
$$
that is,  $A$ and $A^*$ are dissipative,  and so the proof is finished. 
\end{proof}

%%%%%%%%%%%%%%%%%%%%%

From now on, denote by $\{S(t)\}_{t\geq 0}$ the $C_0-$semigroup associated with $A,$ so $u(t)=S(t)u_0$  is the mild solution of the linearized system \eqref{kaw_1}. Next result ensure some properties of the solution of the linear Kawahara system.

\begin{proposition}
\label{prop2} Let $u$ solution of \eqref{kaw_1}. Then, we have for all $T>0$ that
\begin{enumerate}
\item[$(i)$] $\|u(\cdot, T)\|\leq \|u_0\|;$
\item[$(ii)$]${\displaystyle (1-\alpha^2)\int_0^T u_{xx}^2(0,t)dt\leq \|u_0\|^2};$
\item[$(iii)$] $\|u\|_{L^2(0,T;H^2(I))}\leq \sqrt{\dfrac{1}{3}\left(\dfrac{1}{1-\alpha^2}+4T\right)} \|u_0\|.$
\end{enumerate}
\end{proposition}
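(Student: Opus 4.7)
The plan is to establish all three estimates by energy (multiplier) arguments, building (i) and (ii) simultaneously from a single energy identity, and then adding a weighted multiplier argument to extract the $H^2$ control claimed in (iii).

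For (i) and (ii), I will take the inner product of the equation with $u$ in $L^2(I)$. By the computation already carried out in the proof of Proposition \ref{c_0}, one has $(Au,u)=\tfrac12(\alpha^2-1)u_{xx}(0,t)^2$, hence
\begin{equation*}
\tfrac{1}{2}\tfrac{d}{dt}\|u(\cdot,t)\|^{2}=(u_{t},u)=(Au,u)=\tfrac{1}{2}(\alpha^{2}-1)\,u_{xx}(0,t)^{2}.
\end{equation*}
Integration in time from $0$ to $T$ gives the identity
$\|u(\cdot,T)\|^{2}+(1-\alpha^{2})\int_{0}^{T}u_{xx}(0,t)^{2}\,dt=\|u_{0}\|^{2}$, from which both (i) and (ii) follow immediately since $|\alpha|<1$ makes each term on the left-hand side non-negative.

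For (iii) my plan is to use the weighted multiplier $xu$ in order to produce a hidden $H^{2}$ estimate. Multiplying the equation by $xu$, integrating over $I$, and carrying out integration by parts (twice for the $u_{xxx}$ term and four times for the $u_{xxxxx}$ term), the boundary conditions $u(0,t)=u(1,t)=u_{x}(0,t)=u_{x}(1,t)=0$ kill every boundary contribution except the one involving $u_{xx}(1,t)$; using $u_{xx}(1,t)=\alpha\,u_{xx}(0,t)$, the identity collapses to
\begin{equation*}
\tfrac{1}{2}\tfrac{d}{dt}\!\!\int_{0}^{1}\!xu^{2}\,dx+\tfrac{3}{2}\|u_{x}\|^{2}+\tfrac{5}{2}\|u_{xx}\|^{2}=\tfrac{\alpha^{2}}{2}\,u_{xx}(0,t)^{2}.
\end{equation*}
Integrating in $t$ over $[0,T]$, dropping the non-negative term $\tfrac12\int_0^1 xu(x,T)^2\,dx$, bounding $\int_{0}^{1}xu_{0}^{2}\,dx\leq\|u_{0}\|^{2}$ because $x\in(0,1)$, and inserting (ii) to control $\int_{0}^{T}u_{xx}(0,t)^{2}\,dt$ yields
$\tfrac{3}{2}\int_{0}^{T}(\|u_{x}\|^{2}+\|u_{xx}\|^{2})\,dt\leq \|u_{0}\|^{2}/(2(1-\alpha^{2}))$. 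Combining this with the trivial bound $\int_{0}^{T}\|u\|^{2}dt\leq T\|u_{0}\|^{2}$ coming from (i) produces
\begin{equation*}
\|u\|_{L^{2}(0,T;H^{2}(I))}^{2}\leq\Big(T+\tfrac{1}{3(1-\alpha^{2})}\Big)\|u_{0}\|^{2}\leq\tfrac{1}{3}\Big(\tfrac{1}{1-\alpha^{2}}+4T\Big)\|u_{0}\|^{2},
\end{equation*}
where the second inequality uses $T\leq\tfrac{4T}{3}$.

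The only delicate point is the bookkeeping of the many boundary terms in the integration by parts for $\int_{0}^{1}xu\,u_{xxxxx}\,dx$, where one must carefully track the weights $x$ and $xu_{x}$ that are generated at each step; everything else is algebraic. I would also note in the write-up that the identity for the multiplier $xu$ deserves to be stated as a lemma in its own right because the same computation is exactly what gives the hidden regularity $u_{xx}(0,\cdot)\in L^{2}(0,T)$ and the $H^{2}$-in-space trace needed later for the nonlinear theory leading up to Theorem \ref{main2}.
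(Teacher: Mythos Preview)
Your proof is correct and follows essentially the same approach as the paper: the energy identity $(Au,u)=\tfrac12(\alpha^2-1)u_{xx}(0,t)^2$ for (i)--(ii), and the multiplier $xu$ for (iii). Your multiplier identity is in fact slightly sharper than the paper's (the paper carries an extra $\int_I u^2\,dx$ term on the right-hand side, apparently a leftover from the version of the equation with drift $u_x$), which is why you need the harmless padding $T\le\tfrac{4T}{3}$ at the end to recover the stated constant, whereas the paper arrives at $\tfrac{1}{3}(\tfrac{1}{1-\alpha^2}+4T)$ directly.
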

\begin{proof}
Since $\{S(t)\}_{t\geq 0}$ is a $C_0-$semigroup of contractions, item (i) follows.  Now, observe that 
$$\dfrac{d}{dt}\left(\|u(t)\|^2_{L^2(I)}\right)=2(u_t,u)=2(Au,u)=(\alpha^2-1)(u_{xx}(0))^2,$$ and integrating in $(0,T)$ we get (ii).  

Now,  multiplying \eqref{kaw_1} by $xu$, integrating by parts and using the boundary condition we have that
\begin{equation*}
\begin{split}
 \dfrac{d}{dt}\left(\int_I xu^2dx\right)=& 2\int_I xuu_tdx=-2\int_I xuu_{xxx}dx+2\int_I xuu_{xxxxx}dx\\
=& \int_I u^2dx-3\int_I (u_{x})^2dx+\alpha^2(u_{xx}(0,t))^2-5\int_I (u_{xx})^2dx.
\end{split}
\end{equation*}
Integrating both side in $(0,T)$,  holds that
\begin{equation*}
\begin{split} 3\int_0^T\int_I (u_{x})^2dxdt+3 \int_0^T\int_I (u_{xx})^2dxdt \leq& \int_I x(u_0)^2dx+\int_0^T\int_I u^2dxdt+ \int_0^T\alpha^2(u_{xx}(0,t))^2dt\\
\leq& \int_I(u_0)^2dx+\int_0^T\|u_0\|^2dt+ \int_0^T\alpha^2(u_{xx}(0,t))^2dt
\end{split}
\end{equation*}
since $\|u(t)\|\leq \|u_0\|$ for al  $t\geq 0. $ So, using item (ii), we obtain
$$3\int_0^T\int_I (u_{x})^2dxdt+3 \int_0^T\int_I (u_{xx})^2dxdt\leq (1+T+\dfrac{\alpha^2}{1-\alpha^2})\|u_0\|^2.$$
Therefore, 
\begin{equation*}
\begin{split}
3\|u\|^2_{H^2(I)}=&\ 3\int_0^T\int_I u^2dxdt+3\int_0^T\int_I (u_{x})^2dxdt+3 \int_0^T\int_I (u_{xx})^2dxdt\\
\leq&\ 3T\|u_0\|^2+(1+T+\dfrac{\alpha^2}{1-\alpha^2})\|u_0\|^2=\left(\dfrac{1}{1-\alpha^2}+4T\right)\|u_0\|^2,
\end{split}
\end{equation*}
showing the result. 
\end{proof}

%The next result can be prove in the same way as we did before in the Proposition \ref{prop2}, and because it, we will omit the proof. 
%\begin{proposition}
%\label{prop3}
%If $u_0\in L^2(I),$ we have
%\begin{enumerate}
%\item[$(i)$] $\|S(t)u_0\|\leq \|u_0\|$ for all $ t\geq 0$.
%\item[$(ii)$]$\|S(\cdot)u_0\|_{L^2(0,T,H^2(I))}\leq \sqrt{\dfrac{1}{3}\left(\dfrac{1}{1-\alpha^2}+4T\right)}\|u_0\|.$
%\end{enumerate}
%\end{proposition}

The next result ensures the decay of the semigroup associated with the Kawahara operator. This can be proved by using the results shown in the Appendix \ref{appendix}.
\begin{proposition}
\label{prop4}
Existe $\omega>0$ e $C>0$ tal que
$$\|S(t)u_0\|\leq Ce^{-\omega t}\|u_0\|, \ t\geq 0.$$
\end{proposition}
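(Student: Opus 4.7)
The plan is to combine the dissipation identity from Proposition \ref{prop2} with an observability inequality, in the standard manner used for dissipative boundary feedbacks. Integrating the energy identity gives
$$\|u(T)\|^2 = \|u_0\|^2 - (1-\alpha^2)\int_0^T |u_{xx}(0,t)|^2\, dt,$$
so the exponential decay of $\|S(t)u_0\|$ is equivalent to the existence, for some $T>0$, of a constant $C(T)>0$ such that the observability-type bound
$$\|u_0\|^2 \leq C(T)\int_0^T |u_{xx}(0,t)|^2\, dt$$
holds for every mild solution of \eqref{kaw_1}. The Appendix is cited precisely for the energy identities that yield such an inequality, and I would first import that estimate from there.

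Granted the observability, substitution into the dissipation identity gives
$$\|u(T)\|^2 \leq \left(1-\frac{1-\alpha^2}{C(T)}\right)\|u_0\|^2 = \gamma\,\|u_0\|^2$$
with $\gamma\in(0,1)$. The semigroup property then yields $\|S(nT)u_0\|\leq \gamma^{n/2}\|u_0\|$ for every $n\in\mathbb{N}$, and the contractivity $\|S(s)\|\leq 1$ from Proposition \ref{c_0} allows us to bridge the intervals: writing $t=nT+s$ with $0\leq s<T$,
$$\|S(t)u_0\| = \|S(s)S(nT)u_0\|\leq \gamma^{n/2}\|u_0\|\leq C e^{-\omega t}\|u_0\|,$$
where $\omega := -\frac{\ln\gamma}{2T}>0$ and $C = \gamma^{-1/2}$. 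This gives the statement of the proposition.

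The main obstacle is establishing the observability inequality. A compactness-uniqueness argument is natural: assuming the inequality fails, there would exist a sequence $u_0^n$ with $\|u_0^n\|=1$ but $\int_0^T |u_{xx}^n(0,t)|^2\,dt\to 0$. Proposition \ref{prop2}(iii) bounds the corresponding solutions $u^n$ in $L^2(0,T;H^2(I))$, so after extracting a subsequence one obtains a weak limit $u$ solving the linear Kawahara system with the additional trace condition $u_{xx}(0,t)\equiv 0$ on $(0,T)$. Combined with the homogeneous boundary data, this forces $u\equiv 0$ via unique continuation for the Kawahara operator, which then contradicts the normalization once strong compactness is recovered. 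The delicate step is the unique continuation, and I would either invoke a Carleman-type estimate tailored to the fifth-order operator or argue by decomposition into eigenmodes of $A$; either way, the energy identities in the Appendix are what make the passage to the limit rigorous.
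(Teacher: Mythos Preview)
Your proposal is correct and follows essentially the same route as the paper: the paper's proof of Proposition~\ref{prop4} simply invokes Theorem~\ref{teo6} from the Appendix, whose argument is precisely the observability--plus--iteration scheme you describe (dissipation identity, an observability inequality proved by compactness--uniqueness and unique continuation, then geometric decay propagated via the semigroup). The only cosmetic differences are that the paper normalizes the contradicting sequence in $L^2(0,T;L^2(I))$ rather than at the initial time, splits the observability into a ``weak'' multiplier estimate plus an $L^2_{t,x}$--to--trace bound, and cites \cite[Lemma~1.1]{ERRATUM} for the unique continuation step you left open.
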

\begin{proof}
Consider $E(t)=\dfrac{1}{2}\|u\|^2(t)$ the energy associated with \eqref{kaw_1}.
So, thanks to the Theorem \ref{teo6}, we have that 
$$\|S(t)u_0\|^2=2 E(t)\leq C\|u_0\|^2e^{-\mu t},$$
and taking the square root of both sides in the previous inequality with $\omega=\dfrac{\mu}{2}>0$ the results holds.
\end{proof}

The next result shows that the solutions of \eqref{kaw_1} are bounded. 
\begin{proposition}
\label{prop6}There exists $C>0$ such that for any $t>0$, 
$$\|S(t)u_0\|_{H^2(I)}\leq C\sqrt{\dfrac{1}{1-\alpha^2}\dfrac{1}{t}+1}\|u_0\|,$$
holds for any $u_0\in L^2(I).$
\end{proposition}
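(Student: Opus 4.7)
The plan is to combine the time-integrated $H^2$-smoothing estimate of Proposition \ref{prop2}(iii) with the exponential $H^2_\alpha$-decay of Theorem \ref{main1}, bridged by the semigroup property. The underlying philosophy is standard for parabolic-type smoothing: the time-integrated bound tells us that on any window $(0,t)$ the solution visits $H^2$; the semigroup estimate then lets us transport that good state forward to time $t$ without losing too much.

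Concretely, I would first invoke Proposition \ref{prop2}(iii) on the interval $(0,t)$ to obtain
\[
\int_0^t \|S(s)u_0\|_{H^2(I)}^2\,ds \;\leq\; \tfrac{1}{3}\Bigl(\tfrac{1}{1-\alpha^2}+4t\Bigr)\|u_0\|^2.
\]
By the mean value theorem for integrals, there exists $s^*\in(0,t)$ such that
\[
\|S(s^*)u_0\|_{H^2(I)}^2 \;\leq\; \tfrac{1}{t}\int_0^t \|S(s)u_0\|_{H^2(I)}^2\,ds \;\leq\; \tfrac{1}{3}\Bigl(\tfrac{1}{(1-\alpha^2)t}+4\Bigr)\|u_0\|^2.
\]
Next I would write $S(t)u_0 = S(t-s^*)\,S(s^*)u_0$ and apply Theorem \ref{main1} with $k=2$ to the initial datum $S(s^*)u_0$, which gives
\[
\|S(t)u_0\|_{H^2(I)} \;\leq\; C_2 e^{-\omega(t-s^*)}\|S(s^*)u_0\|_{H^2_\alpha(I)} \;\leq\; C_2\|S(s^*)u_0\|_{H^2(I)}.
\]
Plugging in the bound for $\|S(s^*)u_0\|_{H^2(I)}$ and absorbing constants (using $\sqrt{a+4}\leq 2\sqrt{a+1}$) yields the desired estimate
\[
\|S(t)u_0\|_{H^2(I)} \;\leq\; C\sqrt{\tfrac{1}{1-\alpha^2}\tfrac{1}{t}+1}\,\|u_0\|.
\]

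The main subtlety—and what I expect to be the principal obstacle—is that Theorem \ref{main1} for $k=2$ is stated with the norm $\|\cdot\|_{H^2_\alpha(I)}$, so one must verify that $S(s^*)u_0\in H^2_\alpha(I)$, not merely $H^2(I)$. For the chosen $s^*$ the solution $u(\cdot,s^*)$ lies in $H^2(I)$, hence its traces are well defined, and the boundary conditions $u=u_x=0$ at $x=0,1$ that appear in the definition of $H^2_\alpha(I)$ (the condition on $u_{xx}$ requires more regularity and does not enter at the $H^2$ level) are inherited from the linear problem \eqref{kaw_1} in the trace sense. Consequently $\|S(s^*)u_0\|_{H^2_\alpha(I)}=\|S(s^*)u_0\|_{H^2(I)}$ and the argument closes. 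A density approximation of $u_0\in L^2(I)$ by data in $D(A)$ can be used to make this trace step fully rigorous if needed, after which the estimate extends by continuity to all $u_0\in L^2(I)$.
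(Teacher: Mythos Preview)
Your proposal is correct and follows essentially the same approach as the paper: apply the mean value theorem to the time-integrated $H^2$ bound of Proposition~\ref{prop2}(iii), then use $H^2$-boundedness of the semigroup to propagate from the good time $s^*$ to time $t$. The only cosmetic differences are that the paper works on $(0,t/2)$ instead of $(0,t)$ and simply invokes ``semigroup properties'' where you explicitly cite Theorem~\ref{main1}; your careful discussion of why $S(s^*)u_0\in H^2_\alpha(I)$ is a welcome clarification that the paper leaves implicit.
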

\begin{proof}
Define the following function 
$$g(t):=\int_0^t \|S(s)u_0\|_{H^2(I)}^2ds.$$
Applying the mean value theorem we have the existence of $\tau\in\left(0,\dfrac{t}{2}\right)$ such that
$$\|S(\tau)u_0\|_{H^2(I)}^2\cdot \left(\dfrac{t}{2}\right)=\int_0^\frac{t}{2} \|S(s)u_0\|_{H^2(I)}^2ds.$$
Thanks to the item (iii) of Proposition \ref{prop2},  we get
$$\|S(\tau)u_0\|_{H^2(I)}^2\cdot \left(\dfrac{t}{2}\right)=\int_0^\frac{t}{2} \|S(s)u_0\|_{H^2(I)}^2ds\leq \dfrac{1}{3}\left(\dfrac{1}{1-\alpha^2}+2t\right)\|u_0\|^2.$$
Thus, 
$$\|S(\tau)u_0\|_{H^2(I)}^2\leq \dfrac{1}{3}\left(\dfrac{1}{1-\alpha^2}+2t\right)\left(\dfrac{2}{t}\right)\|u_0\|^2=\dfrac{4}{3}\left(\dfrac{1}{1-\alpha^2}\dfrac{1}{2t}+1\right)\|u_0\|^2$$
and so, 
$$\|S(\tau)u_0\|_{H^2(I)}\leq \dfrac{2\sqrt{3}}{3}\sqrt{\dfrac{1}{1-\alpha^2}\dfrac{1}{2t}+1}\|u_0\|\leq \dfrac{2\sqrt{3}}{3}\sqrt{\dfrac{1}{1-\alpha^2}\dfrac{1}{t}+1}\|u_0\|.$$
Finally, semigroup properties ensures that
$$\|S(t)u_0\|_{H^2(I)}=\|S(t-\tau)S(\tau)u_0\|_{H^2(I)}\leq C_1\|S(\tau)u_0\|_{H^2(I)}\leq C_1\dfrac{2\sqrt{3}}{3}\sqrt{\dfrac{1}{1-\alpha^2}\dfrac{1}{t}+1}\|u_0\|,$$
and the proof is achieved. 
\end{proof}

\subsection{Proof of Theorem \ref{main1}} 
Considering $k=0$, thus the result is consequence of Proposition \ref{prop4}.  Now,  taking  $u_0\in D(A)=H^5_\alpha(I)$,  semigroup theory ensures that $u=S(t)u_0\in D(A)$ and $u_t=Au=S(t)(Au_0)$, with $Au_0\in L^2(I).$ Pick $v=u_t$, we have that $v$ satisfies the following initial value problem 
\begin{equation}
\label{eq6}
\begin{cases}
v_t=Av,& (x,t)\in I\times (0,T),\\
v(x,0)=v_0(x)=(Au_0)(x)\in L^2(I), & x\in I.
\end{cases}
\end{equation}
Proposition \ref{prop4} yields that
$$\|v(\cdot, t)\|\leq Ce^{-\omega t}\|v_0\|=Ce^{-\omega t}\|Au_0\|.$$
Since the following norms $\|u\|+\|Au\|$ and $\|u\|_{D(A)}$ are equivalents in $D(A),$ we ensure the existence of two constants $M_1, M_2> 0$ such that $$M_1\|u\|_{D(A)}\leq \|u\|+\|Au\|\leq M_2\|u\|_{D(A)}.$$
Thus, 
\begin{equation*}
\begin{split}
\|S(t)u_0\|_{H^5_\alpha(I)}\leq&\ M_1^{-1}(\|S(t)u_0\|+\|A(S(t)u_0)\|)\\
\leq&\ M_1^{-1}(Ce^{-\omega t}\|u_0\|+Ce^{-\omega t}\|Au_0\|)\\
\leq&\ M_1^{-1}CM_2e^{-\omega t}\|u_0\|_{H^5_\alpha(I)}.
\end{split}
\end{equation*}
The results for $k=1,2,3$ and $4,$ are consequences of an interpolation argument. So, Theorem \ref{main1} is shown.  \qed

\subsection{Well-posendess: Bounded solutions for the nonlinear system}
Consider the following initial boundary value problem
\begin{equation}\label{eq1}
\begin{cases}
u_t+u_{xxx}-u_{xxxxx}+uu_x=0, & (x,t)\in I\times \R,\\
u(0,t)=\varphi(t), \ u(1,t)=u_x(1,t)=u_x(0,t)=0, &  t\in \R,\\
u_{xx}(1,t)=\alpha u_{xx}(0,t), & t\in \R,\\
u(x,0)=u_0(x), &x\in I.
\end{cases}
\end{equation}
Let us study the bounded solution to the system  \eqref{eq1}.  To do this,  let us consider 
$$y(x,t):=u(x,t)+A(x)\varphi(t),$$
with the function $A$ defined by
$$A(x)=\dfrac{\alpha-1}{\alpha+1}x^2+\dfrac{2}{\alpha+1}x-1$$
and $\varphi\in C^1(\R).$ If we suppose that $u$ satisfies \eqref{eq1}, so we have that $y$ satisfies
\begin{equation}
\label{eq7}
\begin{cases}
y_t+y_{xxx}-y_{xxxxx}+yy_x+ay_x+by=f, &(x,t)\in I\times \R,\\
y(0,t)=y(1,t)=0, & t\in \R,\\
y_x(1,t)=\alpha y_x(0,t),&t\in \R,\\
y_{xx}(1,t)=\alpha y_{xx}(0,t)+\left(\dfrac{-2(\alpha-1)^2}{\alpha+1}\right)\varphi(t) &t\in  \R,
\end{cases}
\end{equation}
with 
\begin{equation}
\label{eq7a}
y_x(0,t)=\dfrac{2}{\alpha+1}\varphi(t),\quad a(x,t)=-A(x)\varphi(t), \quad b(x,t)=-A'(x)\varphi(t)
\end{equation} 
and 
\begin{equation}
\label{eq7b}
f=A(x)\varphi'(t)-A(x)A'(x)\varphi^2(t).
\end{equation}
Moreover, we consider that $y$ is a mild solution of \eqref{eq7} if satisfies the integral equation 
\begin{equation}
\label{eq8}
y(t)=S(t-r)y(r)+\int_r^tS(t-r)(-yy_x-ay_x-by+f)(s)ds,
\end{equation}
for all $t\geq r$ and each $ r\in\R$. Thus,  as $y$ is a mild solution of \eqref{eq7}, we have that $$u(x,t)=y(x,t)-A(x)\varphi(t)$$ is a mild solution of \eqref{eq1}. With this in hand, we are in position to prove our second auxiliary result.

\begin{proof}[Proof of Theorem \ref{main2}]
Straightforward calculation shows that, using integration by parts, we get 
\begin{equation}
\label{eq9}
\int_0^\infty e^{-\frac{\omega}{2}\tau}\sqrt{\dfrac{1}{\tau}}d\tau
%&=&{\displaystyle \lim_{t\longrightarrow \infty} \left(2\sqrt{t}e^{-\frac{\omega}{2+\int_0^t\omega e^{-\frac{\omega}{2}\tau}\sqrt{\tau}d\tau\right)}\\
= \int_0^\infty\omega e^{-\frac{\omega}{2}\tau}\sqrt{\tau}d\tau= \int_0^\infty\omega e^{-\frac{\omega}{4}\tau}e^{-\frac{\omega}{4}\tau}\sqrt{\tau}d\tau.
\end{equation}
Pick a function $h(\tau)=e^{-\frac{\omega}{2}\tau}\tau,$ with  $\tau\in \R.$ Since $$h'(\tau)=\left(1-\dfrac{\omega}{2}\tau\right)e^{-\frac{\omega}{2}\tau}=0\Leftrightarrow \tau=\dfrac{2}{\omega},$$ $h'(t)>0$ for $\tau<\dfrac{2}{\omega}$ and $h'(t)<0$ for $\tau>\dfrac{2}{\omega},$  yields that  
$$h(\tau)\leq h\left(\dfrac{2}{\omega}\right)=\dfrac{2e^{-1}}{\omega},  \quad \forall\tau\in \R. $$
So we have
$$e^{-\frac{\omega}{4}\tau}\sqrt{\tau}\leq \sqrt{\dfrac{2e^{-1}}{\omega}}.$$
Therefore, 
\begin{equation}
\label{eq10}
\begin{split}
\int_0^\infty e^{-\frac{\omega}{2}\tau}\sqrt{\dfrac{1}{\tau}}d\tau&=\int_0^\infty\omega e^{-\frac{\omega}{4}\tau}e^{-\frac{\omega}{4}\tau}\sqrt{\tau}d\tau\\&\leq \omega\sqrt{\dfrac{2e^{-1}}{\omega}}\int_0^\infty e^{-\frac{\omega}{4}\tau}d\tau\\&=4\sqrt{\dfrac{2e^{-1}}{\omega}}.
\end{split}
\end{equation}

By other hand,  
\begin{equation}
\label{eq11}
 \int_0^\infty e^{-\frac{\omega}{2}\tau}d\tau= \dfrac{2}{\omega}.
\end{equation}
Thus,  Theorem \ref{main1},  Proposition \ref{prop6} and Agmon inequality\footnote{Agmon inequality in one dimensional case: $\|u\|_{L^\infty(I)}\leq C\|u\|_{L^2(I)}^\frac{3}{4}\|u\|_{H^{2}(I)}^{\frac{1}{4}}\leq C\|u\|_{H^2(I)}, \ I=(0,1).$}, ensure that
\begin{equation*}
\begin{split}
\left\|\int_{-\infty}^t S(t-s)y(\cdot,s)y_x(\cdot,s)ds\right\|_{H^2(I)}\leq& \int_{-\infty}^t\left\| S\left(\dfrac{t-s}{2}\right) S\left(\dfrac{t-s}{2}\right)y(\cdot,s)y_x(\cdot,s)\right\|_{H^2(I)}ds\\
\leq& C_2\int_{-\infty}^te^{-\frac{\omega}{2}(t-s)}\left\| S\left(\dfrac{t-s}{2}\right)y(\cdot,s)y_x(\cdot,s)\right\|_{H^2(I)}ds\\
\leq& CC_2\int_{-\infty}^te^{-\frac{\omega}{2}(t-s)} \sqrt{\dfrac{1}{1-\alpha^2}\dfrac{2}{t-s}+1}\left\|y(\cdot,s)y_x(\cdot,s)\right\|ds\\
\leq& C\int_{-\infty}^te^{-\frac{\omega}{2}(t-s)} \sqrt{\dfrac{1}{1-\alpha^2}\dfrac{2}{t-s}+1}\left\|y(\cdot,s)\right\|_{H^2(I)}\left\|y_x(\cdot,s)\right\|ds\\
\leq& C\left\|y\right\|_{X}^2\int_{-\infty}^te^{-\frac{\omega}{2}(t-s)} \sqrt{\dfrac{1}{1-\alpha^2}\dfrac{1}{t-s}+1}ds.
\end{split}
\end{equation*}
Thanks to the previous inequality,  taking $\tau=t-s$, we get
\begin{equation}
\label{eq12}
\begin{split}
\left\|\int_{-\infty}^t S(t-s)y(\cdot,s)y_x(\cdot,s)ds\right\|_{H^2(I)}\leq&C\left\|y\right\|_{X}^2\int_0^{\infty}e^{-\frac{\omega}{2}\tau} \left(\sqrt{\dfrac{1}{1-\alpha^2}}\sqrt{\dfrac{1}{\tau}}+\sqrt{1}\right)d\tau\\
\leq&C\left\|y\right\|_{X}^2\int_0^{\infty}e^{-\frac{\omega}{2}\tau} \left(\sqrt{\dfrac{1}{\tau}}+1\right)d\tau\\ \leq& C\left(\dfrac{1}{\sqrt{\omega}}+\dfrac{1}{\omega}\right)\left\|y\right\|_{X}^2.
\end{split}
\end{equation}
Now,  considering $\varphi\in C^1(\R),$  we have $a,b\in X$. Therefore,  we get,  using the same computations as before, that
\begin{equation}
\label{eq14}
\begin{split}
\left\|\int_{-\infty}^t S(t-s)a(\cdot,s)y_x(\cdot,s)ds\right\|_{H^2(I)}\leq& C\left\|a\right\|_{X}\left\|y\right\|_{X}\int_0^{\infty}e^{-\frac{\omega}{2}\tau} \left(\sqrt{\dfrac{1}{\tau}}+1\right)d\tau\\
\leq &C\left(\dfrac{1}{\sqrt{\omega}}+\dfrac{1}{\omega}\right)\left\|a\right\|_{X}\left\|y\right\|_{X}
\end{split}
\end{equation}
and
\begin{equation}
\label{eq15}
\begin{split}
 \left\|\int_{-\infty}^t S(t-s)b(\cdot,s)y(\cdot,s)ds\right\|_{H^2(I)} \leq& C\left\|b\right\|_{X}\left\|y\right\|_{X}\int_0^{\infty}e^{-\frac{\omega}{2}\tau} \left(\sqrt{\dfrac{1}{\tau}}+1\right)d\tau\\
 \leq &  C\left(\dfrac{1}{\sqrt{\omega}}+\dfrac{1}{\omega}\right)\left\|b\right\|_{X}\left\|y\right\|_{X}.
\end{split}
\end{equation}

Using the change of variable $y$ by $y-z$ in \eqref{eq14} and \eqref{eq15}, respectively, yields that
\begin{equation}
\label{eq16}
\left\|\int_{-\infty}^t S(t-s)a(\cdot,s)\left[y(\cdot,s)-z(\cdot,s)\right]_xds\right\|_{H^2(I)}\leq C\left(\dfrac{1}{\sqrt{\omega}}+\dfrac{1}{\omega}\right)\left\|a\right\|_{X}\left\|y-z\right\|_{X}
\end{equation}
and
\begin{equation}
\label{eq17}
\left\|\int_{-\infty}^t S(t-s)b(\cdot,s)\left[y(\cdot,s)-z(\cdot,s)\right]ds\right\|_{H^2(I)}\leq C\left(\dfrac{1}{\sqrt{\omega}}+\dfrac{1}{\omega}\right)\left\|b\right\|_{X}\left\|y-z\right\|_{X}.
\end{equation}
Additionally,  thanks to Theorem \ref{main1}, we have
\begin{equation}
\label{eq18}
\begin{split}
\left\|\int_{-\infty}^t S(t-s)f(\cdot,s)ds\right\|_{H^2(I)}\leq&\int_{-\infty}^t\left\| S\left(t-s\right)f(\cdot,s)\right\|_{H^2(I)}ds\\
\leq& C_2\int_{-\infty}^te^{-\omega(t-s)}\left\| f(\cdot,s)\right\|_{H^2(I)}ds\\
\leq& C\left\|f\right\|_{X}\int_{-\infty}^te^{-\omega(t-s)} ds\\
=& C\left\|f\right\|_{X}\int_{0}^\infty e^{-\omega\tau} ds= \dfrac{C}{\omega}\left\|f\right\|_{X},
\end{split}
\end{equation}
where in the last line we have used the following change of variable $\tau=t-s$.  Also, note that by analogous process yields that
\begin{equation}
\label{eq19}
\begin{split}
 &\left\|\int_{-\infty}^t S(t-s)\left[y(\cdot,s)y_x(\cdot,s)-z(\cdot,s)z_x(\cdot,s)\right]ds\right\|_{H^2(I)}\\
 \leq&\int_{-\infty}^t\left\| S\left(\dfrac{t-s}{2}\right) S\left(\dfrac{t-s}{2}\right)\left[y(\cdot,s)y_x(\cdot,s)-z(\cdot,s)z_x(\cdot,s)\right]\right\|_{H^2(I)}ds\\
\leq& C\int_{-\infty}^te^{-\frac{\omega}{2}(t-s)} \sqrt{\dfrac{1}{1-\alpha^2}\dfrac{1}{t-s}+1}\left\|\dfrac{1}{2}\dfrac{d}{dx}(y^2-z^2)(\cdot,s)\right\|ds\\
\leq& C\int_{-\infty}^te^{-\frac{\omega}{2}(t-s)} \sqrt{\dfrac{1}{1-\alpha^2}\dfrac{1}{t-s}+1}\left\|(y-z)(\cdot,s)\right\|_{H^1(I)}\left\|(y+z)(\cdot,s)\right\|_{H^1(I)}ds\\
\leq& C\left\|y-z\right\|_{X}\left\|y+z\right\|_{X}\int_{-\infty}^te^{-\frac{\omega}{2}(t-s)} \sqrt{\dfrac{1}{1-\alpha^2}\dfrac{1}{t-s}+1}ds\\
\leq&C\left(\dfrac{1}{\sqrt{\omega}}+\dfrac{1}{\omega}\right)\left\|y-z\right\|_{X}\left\|y+z\right\|_{X}.
\end{split}
\end{equation}
Set
\begin{equation}
\label{eq20}
(\Psi y)(t):={\displaystyle \int_{-\infty}^t S(t-s)(-yy_x-ay_x-by+f)(\cdot,s)ds}.
\end{equation}
Now, using \eqref{eq12}, \eqref{eq14}, \eqref{eq15} and \eqref{eq18}, we get that
\begin{equation}
\label{eq21}
\begin{split}
\|\Psi y\|_{X}=&\sup_{t\in\R}\|(\Psi y)(t)\|_{H^2(I)}\leq\|(\Psi y)(t)\|_{H^2(I)}\\
\leq& \left\|\int_{-\infty}^t S(t-s)(yy_x)(\cdot,s)ds\right\|_{H^2(I)}+\left\|\int_{-\infty}^t S(t-s)(ay_x)(\cdot,s)ds\right\|_{H^2(I)}\\
&+ \left\|\int_{-\infty}^t S(t-s)(by)(\cdot,s)ds\right\|_{H^2(I)}+\left\|\int_{-\infty}^t S(t-s)f(\cdot,s)ds\right\|_{H^2(I)}\\
\leq&C\left(\dfrac{1}{\sqrt{\omega}}+\dfrac{1}{\omega}\right)\left\|y\right\|_{X}^2+C\left(\dfrac{1}{\sqrt{\omega}}+\dfrac{1}{\omega}\right)\left\|a\right\|_{X}\left\|y\right\|_{X}\\
&+C\left(\dfrac{1}{\sqrt{\omega}}+\dfrac{1}{\omega}\right)\left\|b\right\|_{X}\left\|y\right\|_{X}+\dfrac{C}{\omega}\left\|f\right\|_{X}.
\end{split}
\end{equation}
Thanks to \eqref{eq16}, \eqref{eq17} and \eqref{eq19},  we get that
\begin{equation}
\label{eq23}
\begin{split}
\|\Psi y-\Psi z\|_{X}=&\sup_{t\in\R}\|(\Psi y-\Psi z)(t)\|_{H^2(I)}\leq \|(\Psi y-\Psi z)(t)\|_{H^2(I)}\\
\leq& \left\|\int_{-\infty}^t S(t-s)(yy_x-zz_x)(\cdot,s)ds\right\|_{H^2(I)}\\
&+\left\|\int_{-\infty}^t S(t-s)\left(a(y-z)_x\right)(\cdot,s)ds\right\|_{H^2(I)}\\
&+ \left\|\int_{-\infty}^t S(t-s)\left(b(y-z)\right)(\cdot,s)ds\right\|_{H^2(I)}\\
\leq&C\left(\dfrac{1}{\sqrt{\omega}}+\dfrac{1}{\omega}\right)\left\|y-z\right\|_{X}\left\|y+z\right\|_{X}+C\left(\dfrac{1}{\sqrt{\omega}}+\dfrac{1}{\omega}\right)\left\|a\right\|_{X}\left\|y-z\right\|_{X}\\
&+C\left(\dfrac{1}{\sqrt{\omega}}+\dfrac{1}{\omega}\right)\left\|b\right\|_{X}\left\|y-z\right\|_{X}.
\end{split}
\end{equation}
Moreover,  we have that
\begin{equation}
\label{eq25}
\|\Psi y\|_{X}\leq C\left(\dfrac{1}{\sqrt{\omega}}+\dfrac{1}{\omega}\right)\rho^2+C\left(\dfrac{1}{\sqrt{\omega}}+\dfrac{1}{\omega}\right)(\left\|a\right\|_{X}+\left\|b\right\|_{X})\rho+\dfrac{C}{\omega}\left\|f\right\|_{X}
\end{equation}
and
\begin{equation}
\label{eq26}
\|\Psi y-\Psi z\|_{X}\leq C\left(\dfrac{1}{\sqrt{\omega}}+\dfrac{1}{\omega}\right)\rho\left\|y-z\right\|_{X}+C\left(\dfrac{1}{\sqrt{\omega}}+\dfrac{1}{\omega}\right)(\left\|a\right\|_{X}+\left\|b\right\|_{X})\left\|y-z\right\|_{X},
\end{equation}
for $y,z\in X_\rho$. By hypothesis $$\|\varphi\|_{C_1(\R)}:=\max\{\sup_{t\in \R}|\varphi(t)|,\sup_{t\in \R}|\varphi'(t)|\}\leq \epsilon,$$ since $a(x,t)=-A(x)\varphi(t)$ we get
$$\|a(\cdot,t)\|_{H^2(I)}^2=(\varphi(t))^2\int_0^1 (A^2+A_x^2+A_{xx}^2)dx=(C\varphi(t))^2,$$
with $C^2:={\displaystyle\int_0^1 (A^2+A_x^2+A_{xx}^2)dx}$.  So
$$\|a\|_{X}={\displaystyle \sup_{t\in \R}\{
|\varphi(t)|C\}\leq C\|\varphi\|_{C_1(\R)}\leq C\epsilon. }$$
Analogously,  taking $b(x,t)=-A_x(x)\varphi(t)$, ensures that
$$\|b(\cdot,t)\|_{H^2(I)}^2=(\varphi(t))^2\int_0^1 (A_x^2+A_{xx}^2+A_{xxx}^2)dx=(C\varphi(t))^2.$$
Thus,
$$\|b\|_{X}={\displaystyle \sup_{t\in \R}\{
|\varphi(t)|C\}\leq C\|\varphi\|_{C_1(\R)}\leq C\epsilon. }$$
As $f(x,t)=A(x)\varphi'(t)+A'(x)\varphi(t)-A(x)A'(x)\varphi^2(t)$ and $0<\epsilon\leq 1,$ follows that $$\|f\|_{X}\leq C\epsilon,$$
where $C>0$ independent of $\epsilon.$  

Finally, thanks to the previous inequality, let us consider $$\rho= \dfrac{3C}{\omega}\left\|f\right\|_{X}\leq \dfrac{3C^2}{\omega}\epsilon.$$ For $\epsilon\ll 1$ small enough we have 
$$\rho\ll 1,$$
$$C\left(\dfrac{1}{\sqrt{\omega}}+\dfrac{1}{\omega}\right)\rho\leq \dfrac{1}{3},$$
and
$$C\left(\dfrac{1}{\sqrt{\omega}}+\dfrac{1}{\omega}\right)(\left\|a\right\|_{X}+\left\|b\right\|_{X})<\dfrac{1}{3}.$$
Therefore,  thanks to \eqref{eq25} and \eqref{eq26}, the following holds true
$$
\|\Psi y\|_{X}\leq\rho
$$
and
$$
\|\Psi y-\Psi z\|_{X}\leq\dfrac{2}{3}\|y-z\|_X,
$$
respectively.  Therefore,  using the Banach fixed-point theorem, there exists a unique $y\in X_\rho$ such that $\Psi y=y.$ Thus, for such $y$ yields that $$\|y\|_X=\|\Psi y\|_X\leq \rho$$ and $y$ is a mild solution for \eqref{eq7}.  As $y(x,t)=u(x,t)+A(x)\varphi(t)$, we get 
$$\|u\|_X\leq C\epsilon,$$
showing the result.
\end{proof}

\section{Massera's theorems for the Kawahara operator}\label{Sec3}
In this section our goal in to present several Massera's type theorems associated for the system 
\begin{equation}\label{eq1_a}
\begin{cases}
u_t+u_{xxx}-u_{xxxxx}+uu_x=0, & (x,t)\in I\times \R,\\
u(0,t)=\varphi(t), \ u(1,t)=u_x(1,t)=u_x(0,t)=0, &  t\in \R,\\
u_{xx}(1,t)=\alpha u_{xx}(0,t), & t\in \R,\\
u(x,0)=u_0(x), &x\in I.
\end{cases}
\end{equation}
 These theorem ensures that this higher order dispersive equation has recurrent solutions.  Let us start proving the first result in this way.

\subsection{Proof of Theorem \ref{main3}} 
We have that $v(x,t)=u(x,t+T)$  is the unique solution of 
\begin{equation*}
\begin{cases}
v_t+v_{xxx}-v_{xxxxx}+vv_x=0,& (x,t)\in I\times \R,\\
v(0,t)=\varphi(t+T)=\varphi(t), &  t\in \R,\\
v(1,t)=v_x(1,t)=v_x(0,t)=0, & t\in \R,\\
v_{xx}(1,t)=\alpha v_{xx}(0,t),& t\in \R.
\end{cases}
\end{equation*}
The system above is exactly \eqref{eq1_a},  so the uniqueness of solutions give us that
$$u(x,t+T)=v(x,t)=u(x,t),$$
for all $(x,t)\in I\times \R,$ showing the result. \qed

\subsection{Quasi-periodic solution}
In this section, we are interested in analysing the quasi-periodic solutions of \eqref{eq1_a}. Before it, we present some definitions necessary for this study. 
\begin{definition} We say that the real numbers $\omega_1,\omega_2,\dots,\omega_k$ are rationally independent when
$$m_1\omega_1+\dots+m_k\omega_k=0,$$
only happens when $m_1=\cdots m_k =0$,  with $m_1,\dots,m_k\in Q $,  where $Q$ is the set of all rational numbers. 
\end{definition}

Let $f:I\times \R\longrightarrow\R$ be a continuous function and $e_i$ a unitary vector of $\R^k$ such that the i-$th$ component is $1$ and the others are zero.  We have the following definition.
\begin{definition}
A function $f(x,t)$ is denoted by $\overline{\omega}$--quasi-periodic in $t$ uniformly with respect to $x\in I,$ if there are $\omega_1,\dots,\omega_k\in \R$ rationally independent and a function $F(x,u)\in C(I\times \R^k,\R)$ such that
$$f(x,t)=F(x, t\omega_1,t\omega_2,\dots, t\omega_k)=F(x,t\overline{\omega}),\quad \forall t\in \R\mbox{ and }x\in I,$$
where $\overline{\omega}=(\omega_1,\dots,\omega_k)$ and $F(x,u+2\pi e_i)=F(x,u)$, for all $u\in \R^k$ and $x\in I, \ i=1,2,\dots, k.$ The numbers $\omega_1,\dots,\omega_k$ are called basic frequencies of $f.$ 
\end{definition}

\begin{definition}
We say that  $\varphi(t)$ is $\overline{\omega}$--quasi-periodic in  $t$ if there is $\Phi(u)\in C(\R^k,\R)$ such that
$$\Phi(u+2\pi e_i)=\Phi(u), \quad \forall u\in \R^k, i=1,\dots, k$$
and
$$\varphi(t)=\Phi(t\overline{\omega})=\Phi(t\omega_1,\dots, t\omega_k) \quad \forall t\in \R.$$
\end{definition}

With these definitions in hand,  we prove now the second main result of the work. 
\begin{proof}[Proof of Theorem \ref{main4}] Since $\varphi(t)$ is $\overline{\omega}$--quasi-periodic function in $t$, by definition,  there exists $\Phi(u)\in C(\R^k,\R)$ such that
$$\Phi(u+2\pi e_i)=\Phi(u), \forall u\in \R^k, \ i=1,\dots, k$$
and
$$\varphi(t)=\Phi(t\overline{\omega})=\Phi(t\omega_1,\dots, t\omega_k), \quad \forall t\in \R.$$
Set $\overline{\alpha}=(\alpha_1,\dots,\alpha_k)\in \R^k$ and $\varphi_{\overline{\alpha}}(t)=\Phi(\overline{\alpha}+t\overline{\omega}).$ Thanks to the Theorem \ref{main2}, for each boundary force $\varphi_{\overline{\alpha}}(t)$ the equation \eqref{eq1_a} has unique solution $u_{\overline{\alpha}}\in X_{C_\epsilon}.$ 

Pick now
\begin{equation}
U(x,\overline{\alpha}):=u_{\overline{\alpha}}(x,0).
\label{eq29}
\end{equation}
Thus, $U$ is well defined due to the uniqueness of the solutions.  We prove the result by several claims.  

\vspace{0.2cm}
\noindent\textbf{Claim 1.} $u_{\overline{\alpha}}(x,t+h)=u_{\overline{\alpha}+h\overline{\omega}}(x,t).$
\vspace{0.2cm}

Indeed,  noting that
$$\varphi_{h\overline{\omega}+\overline{\alpha}}(t)=\Phi(t\overline{\omega}+h\overline{\omega}+\overline{\alpha})=\Phi((t+h)\overline{\omega}+\overline{\alpha})=\varphi_{\overline{\alpha}}(t+h),$$
we have that $u_{\overline{\alpha}}(x,t+h)$ and $u_{\overline{\alpha}+h\overline{\omega}}(x,t)$ are solutions of \eqref{eq1_a} with boundary force $\varphi_{h\overline{\omega}+\overline{\alpha}}(t)$. The uniqueness of solutions ensures that 
$$u_{\overline{\alpha}}(x,t+h)=u_{\overline{\alpha}+h\overline{\omega}}(x,t),$$
and the Claim 1 is proved.

\vspace{0.2cm}
\noindent\textbf{Claim 2.} $U(x, \overline{\alpha})=U(x,\alpha_1,\dots,\alpha_k)$ has period $2\pi$ with respect to each argument $\alpha_i.$ 
\vspace{0.2cm}

In fact,  taking $t=0$,  in the Claim 1, we get
$$u_{\overline{\alpha}}(x,h)=u_{\overline{\alpha}+h\overline{\omega}}(x,0)=U(x,\overline{\alpha}+h\overline{\omega}),\quad \forall h\in \R.$$
As $h$ is arbitrary,  we have that 
$$u_{\overline{\alpha}}(x,t)=U(x,\overline{\alpha}+t\overline{\omega}),\quad \forall t\in \R.$$
So,  \eqref{eq29} help us to ensure that
$$U(x,\overline{\alpha}+2\pi e_i)=u_{\overline{\alpha}+2\pi e_i}(x,0),$$
where $\{e_1,\dots, e_k\}$ is the standard basis in $\R^k.$ Since
$$\varphi_{\overline{\alpha}+2\pi e_i}(t)=\Phi(\overline{\omega}+\overline{\alpha}+2\pi e_i)=\Phi(\overline{\omega}+\overline{\alpha})=\varphi_{\overline{\alpha}}(t),$$
holds true, the uniqueness of solution guarantied by Theorem \ref{main2}, gives $$u_{\overline{\alpha}+2\pi e_i}=u_{\overline{\alpha}},$$ therefore
$$U(x,\overline{\alpha}+2\pi e_i)=u_{\overline{\alpha}+2\pi e_i}(x,0)=u_{\overline{\alpha}}(x,0)=U(x,\overline{\alpha}),$$
and the Claim 2 is showed. 

Finally,  taking $\overline{\alpha}=(0,\dots, 0)\in \R^k,$ we have that the external force $\varphi_{\overline{\alpha}}(t)=\varphi(t)$ and $u(x,t)=U(x,t\overline{\omega}). $ Thus, we get  $u$ is a $\overline{\omega}$--quasi-periodic solution in $t.$ 
\end{proof}

\subsection{Almost periodic solution} In this section the goal is to prove that \eqref{eq1_a} have almost periodic solutions.  To do that, let us begin this subsection with the following definition. 
\begin{definition}
Let $(Y, d)$ be a separable and complete metric space and $f:\R\longrightarrow Y$ be a continuous mapping. The function $f$ is said to be almost periodic if for every $\delta>0$ there exists a constant $l(\delta)>0$ such that any interval of length $l(\delta)$ contains at least a number $\tau$ for which $$\sup_{t\in\R} d(f(t+\tau),f(t))<\delta.$$
\end{definition}

Now, we are in position to prove the last result of the work.
\begin{proof}[Proof of Theorem \ref{main5}]
Consider $y,a,b$ and $f$ satisfying \eqref{eq7}, \eqref{eq7a} and \eqref{eq7b}.  Straightforward calculation, thanks to the following change of variable $\tau=s-\sigma$, shows that 
\begin{equation}
\label{eq30}
\begin{split}
\|y(t+\sigma)-y(t)\|_{H^2(I)}=&\left\|\int_{-\infty}^{t+\sigma}S(t+\sigma-s)(-yy_x-ay_x-by+f)(s)ds\right.\\
& \left.-\int_{-\infty}^{t}S(t-s)(-yy_x-ay_x-by+f)(s)ds\right\|_{H^2(I)}\\
\leq& \left\|\int_{-\infty}^{t}S(t-s)\left(y(s+\sigma)y_x(s+\sigma)-y(s)y_x(s)\right)ds\right\|_{H^2(I)}\\
&+\left\|\int_{-\infty}^{t}S(t-s)\left(a(s+\sigma)y_x(s+\sigma)-a(s)y_x(s)\right)ds\right\|_{H^2(I)}\\
& +\left\|\int_{-\infty}^{t}S(t-s)\left(b(s+\sigma)y(s+\sigma)-b(s)y(s)\right)ds\right\|_{H^2(I)}\\
& + \left\|\int_{-\infty}^{t}S(t-s)\left(f(s+\sigma)-f(f)\right)ds\right\|_{H^2(I)}.
\end{split}
\end{equation}
Set  $z(\cdot, s)=y(\cdot, s+\sigma)$ in the expression \eqref{eq19}, we get
\begin{equation}
\label{eq31}
\left\|\int_{-\infty}^{t}S(t-s)\left(y(s+\sigma)y_x(s+\sigma)-y(s)y_x(s)\right)ds\right\|_{H^2(I)}\leq \ C\left(\dfrac{1}{\sqrt{\omega}}+\dfrac{1}{\omega}\right)\|y(\cdot+\sigma)-y(\cdot)\|_X^2.
\end{equation}
Therefore, follows by \eqref{eq16} and \eqref{eq14} with $a(\cdot+\sigma)-a(\cdot)$ instead of $a$ that
\begin{equation}\label{eq32}
\begin{split}
\left\|\int_{-\infty}^{t}S(t-s)\left(a(s+\sigma)y_x(s+\sigma)-a(s)y_x(s)\right)ds\right\|_{H^2(I)} \leq& C\left(\dfrac{1}{\sqrt{\omega}}+\dfrac{1}{\omega}\right)\|a\|_X\|y(\cdot+\sigma)-y(\cdot)\|_X\\& +C\left(\dfrac{1}{\sqrt{\omega}}+\dfrac{1}{\omega}\right)\|a(\cdot+\sigma)-a(\cdot)\|_X\|y\|_X,
\end{split}
\end{equation}
In an analogous way, we get 
\begin{equation}
\label{eq33}
\begin{split}
\left\|\int_{-\infty}^{t}S(t-s)\left(b(s+\sigma)y(s+\sigma)-b(s)y(s)\right)ds\right\|_{H^2(I)} \leq & C\left(\dfrac{1}{\sqrt{\omega}}+\dfrac{1}{\omega}\right)\|b\|_X\|y(\cdot+\sigma)-y(\cdot)\|_X \\&+C\left(\dfrac{1}{\sqrt{\omega}}+\dfrac{1}{\omega}\right)\|b(\cdot+\sigma)-b(\cdot)\|_X\|y\|_X, 
\end{split}
\end{equation}
where we have used \eqref{eq17} and \eqref{eq15} with $b(\cdot+\sigma)-b(\cdot)$ instead of $b.$ Due to the Theorem \ref{main1} and using the change of variables $\tau=t-s$, yields that
\begin{equation}
\label{eq34}
\left\|\int_{-\infty}^t S(t-s)\left(f(\cdot,s+\sigma)-f(\cdot,s)\right)ds\right\|_{H^2(I)}\leq  \dfrac{C}{\omega}\left\|f(\cdot+\sigma)-f(\cdot)\right\|_{X}.
\end{equation}

Now, replacing  \eqref{eq31}, \eqref{eq32},  \eqref{eq33} and \eqref{eq34} into \eqref{eq30}, we ensures that 
\begin{equation}
\label{eq35}
\begin{split}
\|y(\cdot+\sigma)-y(\cdot)\|_{X}\leq&C\left(\dfrac{1}{\sqrt{\omega}}+\dfrac{1}{\omega}\right)\|y(\cdot+\sigma)-y(\cdot)\|_X^2\\
&+C\left(\dfrac{1}{\sqrt{\omega}}+\dfrac{1}{\omega}\right)(\|a\|_X+\|b\|_X)\|y(\cdot+\sigma)-y(\cdot)\|_X\\
& +C\left(\dfrac{1}{\sqrt{\omega}}+\dfrac{1}{\omega}\right)\left(\|a(\cdot+\sigma)-a(\cdot)\|_X+\|b(\cdot+\sigma)-b(\cdot)\|_X\right)\|y\|_X\\
&+ \dfrac{C}{\omega}\left\|f(\cdot+\sigma)-f(\cdot)\right\|_{X}.
\end{split}
\end{equation}
Thus, taking $y\in X_\rho$ and $0<\epsilon\ll 1$ in the proof of Theorem \ref{main2} such that 
$$2C\left(\dfrac{1}{\sqrt{\omega}}+\dfrac{1}{\omega}\right)\rho<\dfrac{1}{3}$$
and
$$C\left(\dfrac{1}{\sqrt{\omega}}+\dfrac{1}{\omega}\right)(\|a\|_X+\|b\|_X)<\dfrac{1}{3},$$
we have that Theorem \ref{main2} is still valid and also is verified that
$$C\left(\dfrac{1}{\sqrt{\omega}}+\dfrac{1}{\omega}\right)\|y(\cdot+\sigma)-y(\cdot)\|_X^2\leq 2\rho C\left(\dfrac{1}{\sqrt{\omega}}+\dfrac{1}{\omega}\right)\|y(\cdot+\sigma)-y(\cdot)\|_X<\dfrac{1}{3}\|y(\cdot+\sigma)-y(\cdot)\|_X.$$
Finally,  applying it in \eqref{eq35} we have
\begin{equation}
\label{eq36}
\begin{split}
\|y(\cdot+\sigma)-y(\cdot)\|_{X}\leq&2\rho C\left(\dfrac{1}{\sqrt{\omega}}+\dfrac{1}{\omega}\right)\|y(\cdot+\sigma)-y(\cdot)\|_X\\
&+C\left(\dfrac{1}{\sqrt{\omega}}+\dfrac{1}{\omega}\right)(\|a\|_X+\|b\|_X)\|y(\cdot+\sigma)-y(\cdot)\|_X\\
& +C\rho\left(\dfrac{1}{\sqrt{\omega}}+\dfrac{1}{\omega}\right)\left(\|a(\cdot+\sigma)-a(\cdot)\|_X+\|b(\cdot+\sigma)-b(\cdot)\|_X\right)\\
&+  \dfrac{C}{\omega}\left\|f(\cdot+\sigma)-f(\cdot)\right\|_{X}\\
\leq&\dfrac{1}{3}\|y(\cdot+\sigma)-y(\cdot)\|_X+\dfrac{1}{3}\|y(\cdot+\sigma)-y(\cdot)\|_X\\
& +\dfrac{1}{3}\left(\|a(\cdot+\sigma)-a(\cdot)\|_X+\|b(\cdot+\sigma)-b(\cdot)\|_X\right)\\
&+\displaystyle  \dfrac{C}{\omega}\left\|f(\cdot+\sigma)-f(\cdot)\right\|_{X}\\
\end{split}
\end{equation}
and follows that
\begin{equation}
\label{eq37}
\|y(\cdot+\sigma)-y(\cdot)\|_{X}\leq \|a(\cdot+\sigma)-a(\cdot)\|_X+\|b(\cdot+\sigma)-b(\cdot)\|_X+\dfrac{3C}{\omega}\left\|f(\cdot+\sigma)-f(\cdot)\right\|_{X}.
\end{equation}
Since $a$, $b$, $f$ are almost periodic functions, $y$ is also an almost periodic function. According to the fact that $$y(x,t):=u(x,t)-A(x)\varphi(t),$$ we obtain that $u$ is also an almost periodic function. Thus, $u$ is almost periodic solution of \eqref{eq1_a} and the Theorem is achieved. 
\end{proof}

\section{Further comments}\label{Sec4}
In this work were able to present properties for a higher order dispersive system, namely, the Kawahara equation,  posed on a bounded domain. Many results in the literature, as we saw in the introduction, treated this equation of control point of view. Here, we provide periodic properties for the following initial boundary value problem 
\begin{equation}\label{eq1f}
\begin{cases}
u_t+u_{xxx}-u_{xxxxx}+uu_x=0, & (x,t)\in I\times \R\\
u(0,t)=\varphi(t), \ u(1,t)=u_x(1,t)=u_x(0,t)=0,& t\in \R\\
u_{xx}(1,t)=\alpha u_{xx}(0,t), & t\in \R\\
u(x,0)=u_0(x), &x\in I,
\end{cases}
\end{equation}
with a forcing boundary term $\varphi(t)$ and a term $\alpha u_{xx}(0,t)$ acting as a damping mechanism.  Thus,  we have succeeded to prove Massera-type theorems for the solution of \eqref{eq1f}.   With respect to the generality of the work, let us make some additional comments.
\begin{itemize}
\item Theorems \ref{main3}, \ref{main4} and \ref{main5} can be obtained for more general nonlinearities. Indeed, if we consider $u \in \mathcal{B}:=C(0,T;L^2(0,1))\cap\ L^2(0,T;H^2(0,1))$ and the nonlinearity $u^pu_x$, $p \in (2,4]$, we have that 
$$
\int_{0}^{T} \int_{0}^{1} |u^{p+2} |d x d t\leqslant C\left\|u\right\|_{C([0,T];L^{2}(0,1))}^{p} \int_{0}^{T}\left\|u_{x}\right\|^{2} d t \leqslant C\left\|u\right\|_{\mathcal{B}}^{p + 2},
$$
by the Gagliardo–Nirenberg inequality. Moreover, recently, Zhou \cite{Zhou} showed the well-posedness of the following initial boundary value problem
\begin{equation}\label{int2} 
\left\{\begin{array}{lll}
u_t-u_{xxxxx}=c_{1} uu_x+c_{2} u^{2}u_x+b_{1}  u_xu_{xx}+b_{2} uu_{xxx},& x \in(0, L), \ t \in \mathbb{R}^{+}, \\
u(t, 0)=h_{1}(t), \quad u(t, L)=h_{2}(t),  \quad u_x(t, 0)=h_{3}(t), &  t \in \mathbb{R}^{+},\\
 u_x(t, L)=h_{4}(t), \quad u_{xx}(t, L)=h(t), &  t \in \mathbb{R}^{+},\\
u(0, x)=u_{0}(x),& x \in(0, L),
\end{array}\right.
\end{equation}
Thus, due to the previous inequality and the results proved in \cite{Zhou}, when we consider $b_1=b_2=0$ and the combination $c_1uu_{x}+c_2u^2u_x$ instead of $uu_x$ on \eqref{eq1f}, the main results of this works remains valid.
\vspace{0.1cm}
\item As in the classical framework of the Massera's theorem,  a principal point is to prove that the initial boundary value problem \eqref{eq1f} admits bounded solution, to do that, an important step is the study of the energy associated with the linear system under consideration, this analysis was made in the Appendix \ref{appendix}. 
\vspace{0.1cm}
\item An important point of the previous remark is to deal with the energy of \eqref{eq1f} we analyse the Kawahara operator removing the drift term $u_x$. This term presents an extra problem because a critical set appears, see \cite{CaKawahara} for details. In this way, to overcome this difficulty it was necessary to remove the drift term. Thus,  an interesting open problem is to extend the result presented in this paper for the Kawahara equation \eqref{eq1f} with the drift term taking into accounte that this equation, with $\varphi(t)=0$, has the critical set phenomena,  as conjectured in \cite{CaKawahara}.
\vspace{0.1cm}
\item It is important to point out that the Massera-type theorem has been extended to many differential equations as we can see in \cite{MoChen1,Fleury, hernandez, Liu, zubelevich} and the references therein. The method employed in these works is to prove the existence of periodic solutions if the solution of the equation under consideration is bounded. 
\vspace{0.1cm}
\item Finally, there are two important points related with the Massera-type theorems for the Kawahara equation. The first one is that we can work with more general nonlinearities, as mentioned before.  The second one is the strong relation between the damping mechanism (stabilization problem) and the Massera-type theorems in our case.
\end{itemize}

\appendix 

\section{Additional properties} \label{appendix}
In this appendix we present some additional properties of the linear Kawahara system. For sake of simplicity we present the results for the linear system, however,  the results obtained here can be also extended for the nonlinear system.  Precisely,  let us study the energy properties for the following linear system
\begin{equation}\label{eq1aa}
\begin{cases}
u_t+u_{xxx}-u_{xxxxx}=0, & (x,t)\in I\times \R,\\
u(0,t)=u(1,t)=u_x(1,t)=u_x(0,t)=0, &  t\in \R,\\
u_{xx}(1,t)=\alpha u_{xx}(0,t), & t\in \R,\\
u(x,0)=u_0(x), &x\in I,
\end{cases}
\end{equation}
where $|\alpha|<1$.  Note that multiplying \eqref{eq1aa} by $u$ and integrating over
$\left(  0,L\right)$ yields
\begin{equation}
\dfrac{d}{dt}\int_{0}^{L}|u(x,t)|^{2}\,dx=\dfrac{1}{2}(\alpha^2-1)(u_{xx}(0))^2\leq0, \  \forall t\geq 0.
\label{G1.1}
\end{equation}
This indicates that the energy $E(t)=\frac12\|u\|^{2}(t)$ associated with \eqref{eq1aa} is not increasing, and the term $\alpha u_{xx}(0,t)$ designs a damping mechanism. To ensure that this energy decays exponentially is natural to show an \textit{observability inequality} associated with the solutions of \eqref{eq1aa}.  Before presenting it, let us first prove \textit{a weak observability inequality}.
\begin{proposition}
\label{prop7}
Consider $u$ solution of \eqref{eq1aa} belonging in $C(0,T;L^2(0,1))\cap L^2(0,T;H^2(0,1))$. Thus, we have
\begin{equation}
\label{eq46}
\dfrac{1}{2}\|u_0\|^2\leq \dfrac{1}{2T}\int_{0}^{T}\int_0^1 |u(x,t)|^2dxdt +\dfrac{1-\alpha^2}{2}\int_{0}^{T} |u_{xx}(0,t)|^2dt,
\end{equation}
for all $T>0$.
\end{proposition}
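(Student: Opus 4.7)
The plan is to extract the inequality directly from the dissipation identity \eqref{G1.1} by a double time integration. Concretely, I would first integrate
\begin{equation*}
\frac{d}{dt}\|u(\cdot,t)\|^{2}=(\alpha^{2}-1)\bigl(u_{xx}(0,t)\bigr)^{2}
\end{equation*}
on the interval $[0,t]$ to obtain the exact energy balance
\begin{equation*}
\|u(\cdot,t)\|^{2}=\|u_{0}\|^{2}-(1-\alpha^{2})\int_{0}^{t}\bigl(u_{xx}(0,s)\bigr)^{2}\,ds,\qquad t\in[0,T].
\end{equation*}

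Next, since the integrand on the right is non-negative and the inner integral is taken over a subinterval of $[0,T]$, I would bound it crudely by the full integral, giving
\begin{equation*}
\|u(\cdot,t)\|^{2}\ge \|u_{0}\|^{2}-(1-\alpha^{2})\int_{0}^{T}\bigl(u_{xx}(0,s)\bigr)^{2}\,ds
\end{equation*}
for every $t\in[0,T]$. Integrating this inequality in $t$ from $0$ to $T$ produces
\begin{equation*}
\int_{0}^{T}\!\!\int_{0}^{1}|u(x,t)|^{2}\,dx\,dt\;\ge\;T\,\|u_{0}\|^{2}\;-\;T(1-\alpha^{2})\int_{0}^{T}\bigl(u_{xx}(0,t)\bigr)^{2}\,dt,
\end{equation*}
and after rearranging and dividing by $2T$ one lands exactly on \eqref{eq46}. (An equivalent route is to invoke Fubini, which replaces the factor $T$ in front of the boundary integral by the sharper weight $(T-s)\le T$, but the crude bound is sufficient for the statement.)

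There is no real obstacle here: the proposition is a direct consequence of the energy identity already proved in \eqref{G1.1}. The only thing to watch is the \emph{direction} of the inequality — one uses the lower bound on $\|u(\cdot,t)\|^{2}$ by $\|u_{0}\|^{2}$ minus the dissipation (i.e. the fact that the semigroup is a contraction is used in reverse, to keep $\|u_{0}\|^{2}$ controlled by the observation together with the interior $L^{2}$ norm), which is precisely what a \emph{weak} observability inequality demands, and which will later be upgraded, via a compactness--uniqueness argument, to an observability inequality without the lower-order $L^{2}(0,T;L^{2}(I))$ term and hence to the exponential decay used in Proposition~\ref{prop4}.
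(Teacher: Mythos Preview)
Your proof is correct and essentially the same as the paper's: the paper multiplies the equation by the weighted multiplier $(T-t)u$ and integrates over $(0,T)\times(0,1)$, which---once you unpack the $(T-t)u_t$ term---is exactly your double time integration combined with the Fubini step you mention in parentheses (yielding the weight $(T-s)$ in front of the boundary term, then bounded by $T$). Your primary route with the crude bound $\int_0^t\le\int_0^T$ is a harmless shortcut that skips the Fubini step at the cost of the sharper constant you already noted.
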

\begin{proof}
We prove the result for the initial data $u_0\in D(A)$, the result in $L^2(0,1)$ follows by density.  First,  multiplying the system \eqref{eq1aa} by $(T-t)u$, integrating by parts in $(0,T)\times (0,1)$ and using the boundary conditions we have
$$-\dfrac{T}{2}\|u_0\|^2+ \dfrac{1}{2}\int_{0}^{T}\int_0^1 u^2 dxdt+ \dfrac{1}{2}(\alpha^2-1)\int_0^T(T-t)(u_{xx}(0,t))^2dt=0,$$
or equivalently, 
$$\dfrac{1}{2}\int_{0}^{T}\int_0^1 u^2 dxdt+ \dfrac{1}{2}(\alpha^2-1)\int_0^T(T-t)(u_{xx}(0,t))^2dt=\dfrac{T}{2}\|u_0\|^2.$$ 
Thus, we get 
$$ \dfrac{1}{2}\|u_0\|^2\leq \dfrac{1}{2T}\int_{0}^{T}\int_0^1 u^2 dxdt+ \dfrac{(\alpha^2-1)}{2}\int_0^T|u_{xx}(0,t)|^2dt,$$
showing the proposition.
\end{proof}

Now, we are in position to prove that the energy associated to \eqref{eq1aa} decays exponentially.
\begin{theorem}
\label{teo6}
There exists $C>0$ and $\mu>0$ such that
\begin{equation}
\label{eq51}
E(t)\leq C\|u_0\|^2e^{-\mu t},
\end{equation}
for all $t\geq 0$ and $u$ solution of \eqref{eq1aa} with $u_0\in L^2(0,1).$
\end{theorem}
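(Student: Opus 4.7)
The plan is to combine the weak observability inequality of Proposition \ref{prop7} with the dissipation identity
\begin{equation*}
\|u(T)\|^2 - \|u_0\|^2 \;=\; -(1-\alpha^2)\int_0^T |u_{xx}(0,t)|^2\,dt,
\end{equation*}
obtained from the computation $(Au,u)=\tfrac12(\alpha^2-1)u_{xx}(0)^2$ already used in Proposition \ref{c_0}. First I would upgrade Proposition \ref{prop7} to a \emph{strong observability inequality}
\begin{equation*}
\|u_0\|^2 \;\leq\; C_T \int_0^T |u_{xx}(0,t)|^2\,dt, \qquad u_0 \in L^2(I).
\end{equation*}
Once this is secured, the dissipation identity yields $\|u(T)\|^2 \leq \gamma\|u_0\|^2$ with $\gamma := 1 - (1-\alpha^2)/C_T \in (0,1)$. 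The semigroup property applied over the intervals $[nT,(n+1)T]$ combined with monotonicity of the energy then gives, for $t = nT + r$ with $r \in [0,T)$, the bound $E(t) \leq \gamma^n E(0) \leq C\,e^{-\mu t}\|u_0\|^2$ with $\mu := -T^{-1}\log \gamma$, which is exactly \eqref{eq51}.

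To absorb the lower-order $L^2$ term present in \eqref{eq46}, I would use a standard compactness-uniqueness argument. Suppose the strong observability fails. Then there is a sequence $(u_0^n) \subset L^2(I)$ with $\|u_0^n\| = 1$ and $\int_0^T |u^n_{xx}(0,t)|^2\,dt \to 0$, where $u^n := S(\cdot)u_0^n$. Plugging into \eqref{eq46} forces $\int_0^T \|u^n(t)\|^2\,dt \to T$. Proposition \ref{prop2}(iii) gives $(u^n)$ bounded in $L^2(0,T;H^2(I)) \cap L^\infty(0,T;L^2(I))$, and from the equation $(u^n_t)$ is bounded in $L^2(0,T;H^{-3}(I))$; Aubin--Lions provides a subsequence converging strongly in $L^2(0,T;L^2(I))$ to some $u^*$, with $\int_0^T \|u^*(t)\|^2\,dt \geq T>0$, while weak $L^2$-convergence of the initial data identifies $u^* = S(\cdot)u_0^*$. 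A hidden-regularity trace estimate in the spirit of Proposition \ref{prop2}(ii) lets one pass to the limit in $u^n_{xx}(0,\cdot)$ and conclude $u^*_{xx}(0,t) = 0$ for a.e.\ $t \in (0,T)$.

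The hard part will be the unique continuation step: a non-trivial solution $u^*$ of \eqref{eq1aa} with $u^*_{xx}(0,t) \equiv 0$ on $(0,T)$ must in fact vanish. Since $A$ has compact resolvent, I would expand $u_0^* = \sum_k c_k \phi_k$ in a basis of (generalized) eigenfunctions of $A$; the trace condition then reads $\sum_k c_k e^{\lambda_k t}\phi_k''(0) = 0$ on $(0,T)$. Analyticity in $t$ and linear independence of exponentials (after grouping by eigenvalue) force $c_k\phi_k''(0)=0$ for every $k$. It then suffices to show $\phi''(0) \neq 0$ for every eigenfunction $\phi$: if $\phi''(0) = 0$, then also $\phi''(1) = \alpha\phi''(0) = 0$, so $\phi$ satisfies the fifth-order ODE $\phi^{(5)} - \phi''' = \lambda \phi$ together with six homogeneous conditions $\phi(0)=\phi'(0)=\phi''(0)=\phi(1)=\phi'(1)=\phi''(1)=0$, an overdetermined problem whose only solution on a $5$-dimensional solution space is $\phi \equiv 0$. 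This is precisely the place where the absence of the drift term $u_x$ in \eqref{eq1aa} is crucial, as pointed out in Section \ref{Sec4}: the drift would introduce a ``critical set'' of $\lambda$'s for which non-trivial solutions survive, but without it no such critical eigenvalues exist. Therefore every $c_k$ vanishes, $u^* \equiv 0$, contradicting $\int_0^T\|u^*\|^2\,dt \geq T$. The strong observability follows, and the exponential decay \eqref{eq51} is obtained by the iteration scheme described above.
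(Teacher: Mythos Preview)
Your overall scheme---compactness--uniqueness to upgrade \eqref{eq46} to a strong observability inequality, then the contraction $E(T)\le\gamma E(0)$ and iteration over intervals of length $T$---coincides with the paper's proof, including the derivation of $\gamma$ from the dissipation identity and the semigroup iteration. The compactness argument (bounds in $L^2(0,T;H^2)$ from Proposition~\ref{prop2}, Aubin--Lions, passage to the limit in the boundary trace) is also the same, up to your choice of normalizing the initial data rather than the space--time $L^2$ norm as the paper does.

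Where you diverge is the unique continuation step, and there your sketch has two real gaps. The paper does not reduce to eigenfunctions: it observes that the limit satisfies the overdetermined \emph{evolution} problem \eqref{eq61} and invokes \cite[Lemma~1.1]{ERRATUM} as a black box to force it to vanish. Your spectral route needs ingredients you have not supplied. First, $A$ is not normal ($D(A)\neq D(A^*)$), so compact resolvent alone does not guarantee that the generalized eigenfunctions are complete in $L^2(I)$; the expansion $u_0^*=\sum_k c_k\phi_k$ and the term-by-term trace identity therefore require a separate Riesz-basis argument, and the ``analyticity in $t$'' you invoke to separate exponentials is likewise not automatic for this semigroup. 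Second, and more importantly, your assertion that the fifth-order ODE $\phi^{(5)}-\phi'''=\lambda\phi$ with the six conditions $\phi(0)=\phi'(0)=\phi''(0)=\phi(1)=\phi'(1)=\phi''(1)=0$ admits only the trivial solution is \emph{exactly} the statement that no critical eigenvalues exist; you correctly flag this as the crux and as the reason the drift term was dropped, but ``six conditions on a five-dimensional solution space'' does not by itself exclude a non-trivial kernel for particular values of $\lambda$. That exclusion is precisely the content of \cite{ERRATUM}, so your argument ultimately rests on the same external input as the paper's, reached through an additional and incompletely justified spectral detour.
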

\begin{proof}
This results is consequence of the following observability inequality
\begin{equation}
\label{eq52}
\dfrac{1}{2}\int_{0}^{T}\int_0^1 u^2 dxdt\leq c_1\int_0^T|u_{xx}(0,t)|^2dt,
\end{equation}
for some constant $c_1>0$ independent of the solution $u.$ 

In fact,  replacing \eqref{eq52} in \eqref{eq46}, we get
\begin{equation}
\label{eq62}
\begin{split}
\dfrac{1}{2}\|u_0\|^2\leq& \dfrac{1}{2T}\int_{0}^{T}\int_0^1 |u(x,t)|^2dxdt +\left(\dfrac{1-\alpha^2}{2}\right)\int_{0}^{T} |u_{xx}(0,t)|^2dt\\
\leq& \dfrac{1}{2T}\int_{0}^{T} |u_{xx}(0,t)|^2dt+\left(\dfrac{1-\alpha^2}{2}\right)\int_{0}^{T} |u_{xx}(0,t)|^2dt\\
=&C\int_{0}^{T} |u_{xx}(0,t)|^2dt,
\end{split}
\end{equation}
where $C=C(T, \alpha)>0$.  As we have that 
$$E'(t)=\dfrac{1}{2}(\alpha^2-1)(u_{xx}(0,t))^2\leq 0,$$
integrating in $(0,t)$ the previous equation and multiplying by $(1+C),$ where $C$ is the same constant obtained previously,  we have that
\begin{equation}
\label{eq63}
(1+C)E(T)\leq CE(0).
\end{equation}
Thus,
$$E(T)\leq \gamma E(0),  \mbox{ where } 0<\gamma=\dfrac{C}{1+C}<1,$$
with $\gamma:=\gamma(T,\alpha)$.  Now,  the same argument used on the interval $[(m-1) T, m T]$ for $m=1,2, \ldots$, yields that
$$
E(m T) \leq \gamma E((m-1) T) \leq \cdots \leq \gamma^{m} E(0)
$$
Thus, we have
$$
E(m T) \leq e^{-\nu m T} E(0)
$$
with
$$
\nu=\frac{1}{T} \ln \left(1+\frac{1}{C}\right)>0
$$
For an arbitrary positive $t$, there exists $m \in \mathbb{N}^{*}$ such that $(m-1) T<t \leq m T$, and by the non-increasing property of the energy, we conclude that
$$
E(t) \leq E((m-1) T) \leq e^{-\nu(m-1) T} E(0) \leq \frac{1}{\gamma} e^{-\nu t} E(0),
$$
showing the result. 
 \end{proof}
 
 Let us now prove the observability inequality.
 \begin{proof}[Proof of  \eqref{eq52}]
We argue by contradiction. Suppose that \eqref{eq52} does not holds. Thus, there exist a sequence $\{u_n\}_{n\in\mathbb{N}}$ of \eqref{eq1aa} such that 
\begin{equation}
\label{eq53}
 \lim_{n\longrightarrow +\infty}\dfrac{\displaystyle{\int_{0}^{T}\int_0^1 u^2 dxdt}}{\displaystyle{\int_0^T|u_{xx}(0,t)|^2dt}}=+\infty.
\end{equation}
Now on,  taking  $\lambda_n=\sqrt{\displaystyle{\int_{0}^{T}\int_0^1 |u_n(x,t)|^2 dxdt}}$ and $v_n(x,t)=\dfrac{u_n(x,t)}{\lambda_n},$ we have that $\{v_n\}_{n\in\mathbb{N}}$ is a sequence satisfying \eqref{eq1aa} with initial data $v_n(x,0)=\dfrac{u_n(x,0)}{\lambda_n}$ and
\begin{equation}
\label{eq54}\int_{0}^{T}\int_0^1 |v_n(x,t)|^2 dxdt=\dfrac{1}{\lambda_n^2}\int_{0}^{T}\int_0^1 |u_n(x,t)|^2 dxdt=1.
\end{equation}
Thanks to the equation \eqref{eq53}, we have
\begin{equation}
\label{eq55}
\begin{split}
\lim_{n\rightarrow +\infty}\int_{0}^{T}|(v_n)_{xx}(0,t)|^2 dt=&\lim_{n\rightarrow +\infty}\dfrac{\displaystyle{\int_{0}^{T}|(u_n)_{xx}(0,t)|^2 dt}}{\lambda_n^2}\\
=&\lim_{n\rightarrow +\infty}\dfrac{\displaystyle{\int_{0}^{T}|(u_n)_{xx}(0,t)|^2 dt}}{\displaystyle{\int_{0}^{T}\int_0^1 |u_n(x,t)|^2 dxdt}}=0.
\end{split}
\end{equation}
Due the relation \eqref{eq46}, since \eqref{eq54} and \eqref{eq55} are verified, we have that  $\{v_n(x,0)\}_{n\in\mathbb{N}}$ is a sequence bounded in $L^2(0,1).$ Therefore,  Propositions \ref{c_0} and \ref{prop2} gives the existence of a constant $M>0$ such that 
\begin{equation}
\label{eq56}
\|v_n\|^2_{L^2(0,T;H^2_0(0,1))}\leq M, 
\end{equation}
for all $n\in\mathbb{N}$. Since $H^2_0(0,1)\hookrightarrow L^2(0,1)$ compactly, we have  $\{v_n\}_{n\in\mathbb{N}}$ is  relatively compact in $L^2(0,T; L^2(0,1)).$ Thus,  there exist a subsequence, still denoted by $\{v_n\}_{n\in\mathbb{N}}$, such that 
$$v_n\rightharpoonup v\mbox{ weakly in } L^2(0,T; H^2_0(0,1))$$
Moreover, since $v_{n,t}$ is bounded in $L^2(0,T; H^{-3}(0,1))$, so thanks to the Aubin-Lions's theorem we have
\begin{equation}
\label{eq57}
v_n\rightarrow v\mbox{ strongly in } L^2(0,T; L^2(0,1)).
\end{equation}
By \eqref{eq54}, we get 
\begin{equation}
\label{eq58}
\|v\|_{L^2(0,T; L^2(0,1))}=1,
\end{equation} 
and so using \eqref{eq55} and \eqref{eq57}, verifies that
\begin{equation}
\label{eq59}{\displaystyle \int_{0}^{T}|v_{xx}(0,t)|^2 dt\leq\liminf_{n\rightarrow +\infty}\int_{0}^{T}|(v_n)_{xx}(0,t)|^2 dt=0},
\end{equation}
which ensures $v_{xx}(0,t)= 0$ , for all  $t\in (0,T).$  Therefore, the function $v$ satisfies
\begin{equation}
\begin{cases}
v_t+v_{xxx}-v_{xxxxx}=0,& (x,t)\in I\times \R,\\
v(0,t)= v(1,t)=v_x(1,t)=v_x(0,t)=v_{xx}(1,t)= v_{xx}(0,t)=0, & t\in \R,\\
v(x,0)=v_0, & x\in I.
\end{cases}
\label{eq61}
\end{equation}
The result follows by using \cite[Lemma 1.1]{ERRATUM} that give us $v=0$, contradicting the hypotheses \eqref{eq58}. Thus,  the observality inequality holds true.  \end{proof}

\subsection*{Acknowledgments:} %The authors thank the anonymous referee for their helpful comments and suggestions. 
Capistrano–Filho was supported by CNPq grant number 307808/2021-1,  CAPES grant numbers 88881.311964/2018-01 and 88881.520205/2020-01,  MATHAMSUD grant 21-MATH-03 and Propesqi (UFPE).  This work is part of the PhD thesis of I.  M. de Jesus at Department of Mathematics of the Universidade Federal de Pernambuco.


\begin{thebibliography}{99}  
\bibitem{ASL} B. Alvarez-Samaniego and D. Lannes, \emph{Large time existence for 3D water-waves and asymptotics}, Invent. Math. 171, no. 3, 485--541 (2008).                                                                               

 \bibitem{CaKawahara}F. D. Araruna, R. A. Capistrano-Filho and G. G. Doronin, \textit{Energy decay for the modified Kawahara equation posed in a bounded domain}, J. Math. Anal. Appl., 385:(2),  743--756 (2012). 
   
%\bibitem{BaiKomLor111}C. Baiocchi, V. Komornik, P. Loreti,  \textit{Ingham--Beurling type theorems with weakened gap conditions},  Acta Math. Hungar. 97 (2002) 55--95. 

\bibitem{BLS}J. L. Bona, D. Lannes and J.-C. Saut, \emph{Asymptotic models for internal waves}, J. Math. Pures Appl. (9):89, no. 6, 538--566 (2008).

\bibitem{boumediene} B. Chentouf,  \textit{Well-posedness and exponential stability of the Kawahara equation with a time-delayed localized damping}, Mathematical Methods in the Applied Sciences,  https://doi.org/10.1002/mma.8369.

\bibitem{CaVi} R. A. Capistrano-Filho and V. H. Gonzalez Martinez, \textit{Stabilization results for delayed fifth order KdV-type equation in a bounded domain},  arXiv:2112.14854 [math.AP].

\bibitem{CaGo} R. A. Capistrano-Filho and M. M. de S. Gomes,  \textit{Well-posedness and controllability of Kawahara equation in weighted Sobolev spaces}, Nonlinear Analysis, Volume \textbf{207}, 1--24 (2021).

\bibitem{CaSo} R. A. Capistrano-Filho and L. S. de Sousa, \textit{Control results with overdetermination condition for higher order dispersive system}, Journal of Mathematical Analysis and Applications 506:(1), 1--22  (2022).

\bibitem{CaSoGa}R. A. Capistrano-Filho, L. S. de Sousa and F. A. Gallego, \textit{Control of Kawahara equation with overdetermination condition: The unbounded cases},  arXiv:2110.08803 [math.AP].

\bibitem{MoChen} M. Chen, {\em Internal controllability of the Kawahara equation on a bounded domain}, Nonlinear Analysis, 185, 356--373 (2019).

\bibitem{MoChen1}M. Chen, \textit{Recurrent solutions of the Korteweg–de Vries equation with boundary force}, Indian J.  Pure Appl. Math.  53, 112--126 (2022). 

\bibitem{Fleury}M. Fleury, J. G. Mesquita and A. Slavík,  \textit{Massera's theorems for various types of equations with discontinuous solutions}, Journal of Differential Equations, 269:12, 11667--11693 (2020).

\bibitem{Hasimoto1970}  H. Hasimoto, \emph{Water waves}, Kagaku, 40,  401--408  [Japanese]  (1970). 

\bibitem{hernandez} E. M.  Hernández,  \textit{A Massera type criterion for a partial neutral functional differential equation}, Electronic Journal of Differential Equations (EJDE),  No. 40: 17, (2002).

%\bibitem{Horm}L.  Hörmander, \textit{The Analysis of Linear Partial Differential Operators I}, Springer-Verlag, New York, 1983.                    

\bibitem {Kawahara}T. Kawahara, \textit{Oscillatory solitary waves in dispersive media}, J. Phys. Soc. Japan,  33 , 260--264 (1972).

%\bibitem{Vilmos} V. Komornik, \textit{Exact Controllability and Stabilization. The Multiplier Method Collection}, RMA vol 36 (Paris Masson) (1994).
%
%\bibitem{KomLor2005}V. Komornik and P. Loreti, \textit{Fourier Series in Control Theory}. Springer Verlag. 2005.          

%\bibitem{massera} Massera, J. L. “On Liapounoff’s Conditions of Stability.” Annals of Mathematics 50, no. 3 (1949): 705–21.     

\bibitem{Lannes} D. Lannes, \textit{The water waves problem. Mathematical analysis and asymptotics}. Mathematical Surveys and Monographs, 188. American Mathematical Society, Providence, RI, 2013. xx+321 pp.

\bibitem{Liu} Q. Liu, N. V.  Minh, G.  Nguerekata and  R. Yuan, \textit{ Massera Type Theorems for Abstract Functional Differential Equations}, Funkcialaj Ekvacioj,  51: 3, 329--350 (2008).

\bibitem{massera}J. L.  Massera,  \textit{The existence of periodic solutions of systems of differential  equations,} Duke Math.  J.  17,  457--475 (1950).

\bibitem{Pazy} A. Pazy,  \emph{ Semigroups of Linear Operators and Applications to Partial Differential Equations}, Appl. Math. Sci., vol. 44, Springer, New York, 1983.

%\bibitem{Rosay}J.  P.  Rosay,  \textit{A very elementary proof of the Malgrange-Ehrenpreis theorem}, Amer. Math. Monthly, 98 (1991), pp. 518--523.

\bibitem{CV} P. N.  Silva and C. F. Vasconcellos, \emph{Stabilization of the linear Kawahara equation with localized damping},  Asymptotic Analysis. \textbf{58}, 229--252 (2008).

\bibitem{ERRATUM} P. N.  Silva and C. F. Vasconcellos, \emph{Erratum: Stabilization of the linear Kawahara equation with localized damping},  Asymptotic Analysis. \textbf{66}, 119--124 (2010).

%\bibitem{zhang}B.-Y.  Zhang and X. Zhao, \textit{Control and stabilization of the Kawahara equation on a periodic domain}, Communications in Information and Systems,  \textbf{12} (1), 77--96 (2012).

%\bibitem{zhang1}B.-Y.  Zhang and X. Zhao, \textit{Global controllability and stabilizability of Kawahara equation on a periodic domain}, Mathematical Control \& Related Fields, \textbf{5} 2,  335--358 (2015).
               
\bibitem{Zhou} D. Zhou, \textit{Non-homogeneous initial-boundary-value problem of the fifth-order Korteweg-de Vries equation with a nonlinear dispersive term},   Journal of Mathematical Analysis and Applications, Volume 497, Issue 1, (2021).

\bibitem{zubelevich} O. E. Zubelevich,  \textit{A note on theorem of Massera},  Regular and Chaotic Dynamics,  vol. 11, no. 4,  475--481 (2006).

\end{thebibliography}
\end{document}